\newtheorem{theorem}{Theorem}[section]
\newtheorem{lemma}[theorem]{Lemma}
\newtheorem{corollary}[theorem]{Corollary}
\newtheorem{proposition}[theorem]{Proposition}
\newtheorem{definition}[theorem]{Definition}
\newtheorem{example}[theorem]{Example}
\def\D{\displaystyle}
\DeclareMathOperator{\ord}{ord} \DeclareMathOperator{\Card}{Card}
\DeclareMathOperator{\trdeg}{tr.deg} \DeclareMathOperator{\rk}{rk}
\DeclareMathOperator{\lcm}{lcm}\DeclareMathOperator{\Ker}{Ker}
\begin{document}

\begin{frontmatter}

\title{Bivariate Kolchin-type dimension polynomials of non-reflexive prime difference-differential ideals. The case of one translation}

\author{Alexander Levin}
\address{The Catholic University of America\\ Washington, D. C.
20064} \ead{levin@cua.edu}
\ead[url]{https://sites.google.com/a/cua.edu/levin}

\begin{abstract}
We use the method of characteristic sets with respect to two term
orderings to prove the existence and obtain a method of computation
of a bivariate Kolchin-type dimension polynomial associated with a
non-reflexive difference-differential ideal in the algebra of
difference-differential polynomials with several basic derivations
and one translation. In particular, we obtain a new proof and a
method of computation of the dimension polynomial of a non-reflexive
prime difference ideal in the algebra of difference polynomials over
an ordinary difference field. As a consequence, it is shown that the
reflexive closure of a prime difference polynomial ideal is the
inverse image of this ideal under a power of the basic translation.
We also discuss applications of our results to the analysis of
systems of algebraic difference-differential equations.
\end{abstract}

\begin{keyword}
Difference-differential polynomial, reflexive closure, dimension
polynomial, reduction, characteristic set
\end{keyword}

\end{frontmatter}

\section{Introduction}
\setcounter{equation}{0}
\renewcommand{\theequation}{\thesection.\arabic{equation}}

The role of dimension polynomials in differential and difference
algebra is similar to the role of Hilbert polynomials in commutative
algebra and algebraic geometry. An important feature of such
polynomials is that they describe in exact terms the freedom degree
of a continuous or discrete dynamic system as well as the number of
arbitrary constants in the general solution of a system of partial
algebraic differential or difference equations. Furthermore, a
dimension polynomial associated with a finite system of generators
of a differential, difference or difference-differential field
extension carries certain birational invariants, numbers that do not
change when we switch to another system of generators. These
invariants, which characterize the extension, are closely connected
with some other its important characteristics such as the
differential (respectively, difference or difference-differential)
transcendence degree, type and typical dimension of the extension.
Finally, properties of dimension polynomials associated with prime
differential (respectively, difference or difference-differential)
ideals provide a powerful tool in the dimension theory of the
corresponding rings (see, for example, \citep{Jo69}, \citep[Ch.
7]{KLMP99}, and \citep{LM95}).

Dimension polynomials in differential and difference algebra have
been extensively studied since 1960s when E. Kolchin \citep{Ko64}
introduced a concept of a differential dimension polynomial
associated with a finitely generated differential field extension
(see also \citep[Ch. II, Sect. 12]{Ko73}). The corresponding
characteristics of difference and inversive difference field
extensions were introduced in \citep{Le78} and \citep{Le80}.

In this paper we adjust a generalization of the Ritt-Kolchin method
of characteristic sets developed in \citep{Le13} to the case of
(non-inversive) difference-differential polynomials with one
translation and apply this method to prove the existence and outline
a method of computation of a bivariate dimension polynomial
associated with a non-reflexive difference-differential polynomial
ideal. Our main result (Theorem 4.2) can be viewed as an essential
generalization (in the case of one translation) of the existing
theorems on bivariate dimension polynomials of
difference-differential and difference field extensions, see
\citep[Theorem 5.4]{Le00} and \citep[Theorems 4.2.16 and
4.2.17]{Le08}. The latter theorems deal with extensions that arise
from factor rings of difference-differential (or difference)
polynomial rings by reflexive difference-differential (respectively,
difference) prime ideals. Our paper extends these results to the
case when the prime ideals are not necessarily reflexive, so the
induced translations of the factor rings are not necessarily
injective. We also extend the results of \citep{Le18}; in the last
section we prove that the reflexive closure of a prime difference
polynomial ideal is the inverse image of a power of the basic
translation (see Corollary 4.4), generalize this result to the case
of several translations and include a discussion of the obstacles
for extending the main result of the paper to this case.
Furthermore, it is shown that the ring of difference-differential
polynomials over a difference-differential field with one
translation satisfies the ascending chain conditions (ACC) for prime
difference-differential ideals (Corollary 4.5). Note that rings of
differential (respectively, difference) polynomials over a
differential (respectively, difference) fields do not satisfy the
ACC for differential (respectively, difference) ideals, see
\citep[Sect. 11]{Ritt32} and \citep[Ch. 2, Example 3]{Cohn65}. At
the same time, such rings, as well as rings of
difference-differential polynomials, satisfy the ACC for perfect
ideals. (In the most general case of perfect difference-differential
polynomial ideals, the corresponding result is proven in
\citep{Cohn70}.) Note that perfect difference (and
difference-differential) ideals are reflexive. It is not known, in
general, what is the smallest class of difference (or
difference-differential) non-reflexive ideals that satisfies the
ACC. The most recent results of this kind in the difference case
were obtained in \citep{Le15} (this work shows that non-reflexive
well-mixed difference polynomial ideals do not satisfy ACC) and
\citep{Wa18} where the author proved that non-reflexive well-mixed
radical difference polynomial ideals generated by binomials satisfy
the ACC. We also discuss the relationship between the obtained
difference-differential dimension polynomial and the concept of
strength of a system of algebraic difference-differential equations
in the sense of A. Einstein. This concept was introduced in
\citep{Ei53} as a qualitative characteristic of a system of partial
differential equations. The corresponding notion for systems of
partial difference equations was introduced in \citep[Sect.
6.4]{KLMP99}; a detailed discussion of this concept in the
difference case can be found in \citep[Sect. 7.7]{Le08}.
Furthermore, as a consequence of our main result, we obtain a new
proof and a method of computation of the dimension polynomial of a
non-reflexive prime difference ideal in the algebra of difference
polynomials over an ordinary difference field. The existence of such
a polynomial was first established in \citep[Sect. 4.4]{Hr12}, an
alternative proof was obtained in \citep[Sect. 5.1]{Wi13}. However,
these proofs are not constructive, while our approach leads to an
algorithm for computing dimension polynomials.

\section{Preliminaries}
\setcounter{equation}{0}
\renewcommand{\theequation}{\thesection.\arabic{equation}}
In this section we present some basic concepts and results used in
the rest of the paper.

\smallskip

Throughout the paper $\mathbb{Z}$, $\mathbb{N}$, and $\mathbb{R}$
denote the sets of all integers, all non-negative integers and all
real numbers, respectively. For any positive integer $p$, we set
$\mathbb{N}_{p} = \{1, \dots, p\}$. By a ring we always mean an
associative ring with unity. Every ring homomorphism is unitary
(maps unity onto unity), every subring of a ring contains the unity
of the ring, and every algebra over a commutative ring is unitary.
Every field considered below is supposed to have zero
characteristic.

\medskip

If $B = A_{1}\times\dots\times A_{k}$ is a Cartesian product of $k$
ordered sets with orders $\leq_{1},\dots, \leq_{k}$, respectively
($k\in \mathbb{N}$, $k\geq 1$), then by the product order on the set
$B$ we mean a partial order $\leq_{P}$ such that $(a_{1},\dots,
a_{k})\leq_{P}(a'_{1},\dots, a'_{k})$ if and only if
$a_{i}\leq_{i}a'_{i}$ for $i=1,\dots, k$. In particular, if $a =
(a_{1},\dots, a_{k}), \,a'=(a'_{1},\dots, a'_{k})\in
\mathbb{N}^{k}$, then $a\leq_{P}a'$ if and only if $a_{i}\leq
a'_{i}$ for $i=1,\dots, k$. We write $a <_{P}a'$ if $a\leq_{P}a'$
and $a\neq a'$.

\medskip

The proof of the following statement can be found in \citep[Ch. 0,
Lemma 15]{Ko73}.
\begin{lemma}
Let $A$ be an infinite subset of
$\mathbb{N}^{m}\times{\mathbb{N}_{n}}$ ($m,n\in\mathbb{N}$, $n\geq
1$). Then there exists an infinite sequence of elements of $A$,
strictly increasing relative to the product order, in which every
element has the same projection on $\mathbb{N}_{n}$.
\end{lemma}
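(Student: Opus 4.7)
The plan is to reduce the statement, via pigeonhole on the finite factor $\mathbb{N}_{n}$, to the purely combinatorial claim that every infinite subset of $\mathbb{N}^{m}$ contains an infinite sequence that is strictly increasing in the product order; then I would prove this claim by induction on $m$, feeding in the well-ordering of $\mathbb{N}$.

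First I would observe that since $\mathbb{N}_{n} = \{1,\dots,n\}$ is finite while $A$ is infinite, there is some $j_{0}\in\mathbb{N}_{n}$ whose fiber $A_{j_{0}} = \{a\in\mathbb{N}^{m} : (a,j_{0})\in A\}$ is infinite. If $a^{(1)} <_{P} a^{(2)} <_{P} \dots$ is any infinite strictly $<_{P}$-increasing sequence in $A_{j_{0}}$, then $((a^{(i)}, j_{0}))_{i\geq 1}$ is an infinite sequence in $A$, strictly increasing in the product order on $\mathbb{N}^{m}\times\mathbb{N}_{n}$, with the common projection $j_{0}$ on $\mathbb{N}_{n}$. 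So the whole content is the auxiliary claim about $\mathbb{N}^{m}$.

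Next I would establish the auxiliary claim by induction on $m$. The case $m=0$ is vacuous and the case $m=1$ is immediate from the well-ordering of $\mathbb{N}$: any infinite subset of $\mathbb{N}$ can be enumerated as a strictly increasing sequence. For the step from $m$ to $m+1$, write elements of an infinite set $S\subseteq \mathbb{N}^{m+1}$ as pairs $(a,c)$ with $a\in\mathbb{N}^{m}$, $c\in\mathbb{N}$, and split on the last-coordinate projection. If some value $c_{0}$ has infinite preimage in $S$, the set $\{a : (a,c_{0})\in S\}$ is infinite in $\mathbb{N}^{m}$, so the inductive hypothesis delivers a sequence $a^{(1)} <_{P} a^{(2)} <_{P} \dots$ and $((a^{(i)},c_{0}))_{i\geq 1}$ is the required sequence. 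Otherwise every fiber over the last coordinate is finite, so the projection takes infinitely many values; choose $(a^{(i)},c_{i})\in S$ with $c_{1} < c_{2} < \cdots$, extract from the sequence $(a^{(i)})$ in $\mathbb{N}^{m}$ an infinite subsequence that is weakly $\leq_{P}$-increasing, and combine it with the strictly increasing last coordinates to obtain a $<_{P}$-increasing sequence in $\mathbb{N}^{m+1}$.

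The one genuinely nontrivial step, and the only real obstacle, is producing a weakly $\leq_{P}$-increasing subsequence of an arbitrary infinite sequence in $\mathbb{N}^{m}$, i.e.\ the well-quasi-ordering property of $(\mathbb{N}^{m},\leq_{P})$. I would handle it by a parallel induction on $m$: in $\mathbb{N}$ any infinite sequence has a weakly increasing subsequence (either the sequence is bounded, in which case some value repeats infinitely often and yields a constant subsequence, or it is unbounded and yields a strictly increasing one); then for $\mathbb{N}^{m+1}$ I would first thin the sequence so the first coordinate is weakly increasing, then apply the inductive hypothesis to the tails in $\mathbb{N}^{m}$, so that all $m+1$ coordinates become weakly increasing simultaneously. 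This auxiliary well-quasi-ordering fact is precisely where the combinatorial difficulty lives; once it is in hand, both cases of the inductive step in the main claim are routine.
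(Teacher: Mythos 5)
Your proof is correct, and the paper does not actually prove this lemma itself --- it simply cites \citep[Ch.~0, Lemma~15]{Ko73}, where the argument is essentially the one you give: pigeonhole on the finite factor, then induction on $m$ using the well-quasi-ordering of $(\mathbb{N}^{m},\leq_{P})$. Your write-up correctly isolates the one nontrivial ingredient (extracting a weakly $\leq_{P}$-increasing subsequence) and handles it properly, so nothing is missing.
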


\begin{center}
NUMERICAL POLYNOMIALS
\end{center}
\begin{definition}
A polynomial $f(t_{1}, \dots,t_{p})$ in $p$ variables $t_{1},\dots,
t_{p}$ ($p\in\mathbb{N},\, p\geq 1$) with rational coefficients is
called {\em numerical} if $f(t_{1},\dots, t_{p})\in\mathbb{Z}$ for
all sufficiently large $p$-tuples $(t_{1}, \dots,
t_{p})\in\mathbb{Z}^{p}$ (that is, there exist integers
$s_{1},\dots, s_{p}$ such that $f(r_{1},\dots, r_{p})\in\mathbb{Z}$
whenever $(r_{1},\dots, r_{p})\in\mathbb{Z}$ and $r_{i}\geq s_{i}$
for all $i = 1,\dots, p$.)
\end{definition}

Obviously, every polynomial with integer coefficients is numerical.
As an example of a numerical polynomial in $p$ variables with
non-integer coefficients ($p\in\mathbb{N}, p\geq 1$) one can
consider a polynomial $\D\prod_{i=1}^{p}{t_{i}\choose m_{i}}$ \,
where $m_{1},\dots, m_{p}\in\mathbb{N}$. (As usual, $\D{t\choose k}$
($k\in\mathbb{Z},\, k\geq 1$) denotes the polynomial
$\D\frac{t(t-1)\dots (t-k+1)}{k!}$ in one variable $t$,
$\D{t\choose0} = 1$, and $\D{t\choose k} = 0$ if $k < 0$.)

\medskip

The following theorem proved in \citep[Ch.2]{KLMP99} gives the
``canonical'' representation of a numerical polynomial. It also
shows that the words ``for all sufficiently large'' in Definition
2.2 can be replaced with ``all'', that is, a numerical polynomial
takes integer values for all integer values of its arguments.

\begin{theorem}
Let $f(t_{1},\dots, t_{p})$ be a numerical polynomial in $p$
variables and let $\deg_{t_{i}}\, f = m_{i}$ ($m_{1},\dots,
m_{p}\in\mathbb{N}$). Then $f(t_{1},\dots, t_{p})$ can be
represented as
\begin{equation}
f(t_{1},\dots t_{p}) =\D\sum_{i_{1}=0}^{m_{1}}\dots
\D\sum_{i_{p}=0}^{m_{p}} {a_{i_{1}\dots i_{p}}}{t_{1}+i_{1}\choose
i_{1}}\dots{t_{p}+i_{p} \choose i_{p}}
\end{equation}
with uniquely defined integer coefficients $a_{i_{1}\dots i_{p}}$.
\end{theorem}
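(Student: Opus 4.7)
The plan is to prove Theorem 2.3 by induction on the number $p$ of variables.

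For the base case $p=1$, I would invoke the classical fact that $\{\binom{t}{k}\}_{k\geq 0}$ is a $\mathbb{Q}$-basis of $\mathbb{Q}[t]$ in which a polynomial is numerical iff all its coordinates are integers; this is obtained by iterating the finite difference $\Delta f(t) = f(t+1) - f(t)$, using $\Delta\binom{t}{k} = \binom{t}{k-1}$, so that $\Delta^{k} f(0)$ recovers the $k$-th coefficient. The Vandermonde identity $\binom{t+i}{i} = \sum_{k=0}^{i}\binom{i}{k}\binom{t}{k}$ exhibits the change-of-basis matrix from $\{\binom{t}{k}\}_{k=0}^{m}$ to $\{\binom{t+k}{k}\}_{k=0}^{m}$ as unitriangular with integer entries, hence invertible over $\mathbb{Z}$, so the integrality criterion transfers verbatim to the shifted basis. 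Since each $\binom{t+i}{i}$ is integer-valued on all of $\mathbb{Z}$, this also yields the auxiliary claim that a numerical polynomial in one variable takes integer values at every integer, not merely at sufficiently large ones.

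For the inductive step, I would expand $f$ in the $t_{1}$ variable using the one-variable basis,
\[
f(t_{1},\dots,t_{p}) \;=\; \sum_{i_{1}=0}^{m_{1}} g_{i_{1}}(t_{2},\dots,t_{p})\,\binom{t_{1}+i_{1}}{i_{1}},
\]
with each $g_{i_{1}}\in\mathbb{Q}[t_{2},\dots,t_{p}]$ of partial degree at most $m_{j}$ in $t_{j}$ for $j\geq 2$ (since the change of basis from powers of $t_{1}$ to the shifted binomials has coefficients in $\mathbb{Q}$). The crucial step is to show that each $g_{i_{1}}$ is itself numerical: for any sufficiently large integer tuple $(s_{2},\dots,s_{p})$, the specialization $f(t_{1},s_{2},\dots,s_{p})$ is a numerical one-variable polynomial, so the base case forces its coefficients $g_{i_{1}}(s_{2},\dots,s_{p})$ in the $\binom{t_{1}+i_{1}}{i_{1}}$ basis to be integers, which is exactly the definition of $g_{i_{1}}$ being numerical. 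Feeding each $g_{i_{1}}$ through the induction hypothesis and substituting back yields the representation (2.1) with integer coefficients $a_{i_{1}\dots i_{p}}$.

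Uniqueness of the coefficients is immediate, because the products $\prod_{j=1}^{p}\binom{t_{j}+i_{j}}{i_{j}}$ have pairwise distinct leading terms $\prod_{j} t_{j}^{i_{j}}/i_{j}!$ with respect to multi-degree and are therefore $\mathbb{Q}$-linearly independent. Given integrality of the coefficients, the claim that $f$ takes integer values on the entire lattice $\mathbb{Z}^{p}$ is then automatic, since $\binom{n+i}{i}\in\mathbb{Z}$ for every $n\in\mathbb{Z}$. I expect the main obstacle to lie in the base case, specifically in transferring the standard integrality criterion from the $\binom{t}{k}$ basis to the shifted basis $\binom{t+k}{k}$; once this is in hand, the inductive step is essentially the observation that specialization preserves numericality, and uniqueness is formal.
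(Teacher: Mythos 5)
Your proof is correct, and it follows the standard route that the paper itself defers to (the reference \citep[Ch.~2]{KLMP99}): establish the one-variable case via finite differences and the unitriangular integer change of basis between $\bigl\{{t\choose k}\bigr\}$ and $\bigl\{{t+k\choose k}\bigr\}$, then induct on the number of variables by expanding in $t_{1}$ and using specialization at large integer tuples to see that the coefficient polynomials $g_{i_{1}}$ are again numerical. The only point worth making explicit is that when you specialize $(t_{2},\dots,t_{p})$ the degree in $t_{1}$ may drop, but the identity $f(t_{1},s_{2},\dots,s_{p})=\sum_{i_{1}}g_{i_{1}}(s_{2},\dots,s_{p}){t_{1}+i_{1}\choose i_{1}}$ is still the (unique) expansion in the basis $\bigl\{{t_{1}+i\choose i}\bigr\}_{i\le m_{1}}$, so the base case still forces every $g_{i_{1}}(s_{2},\dots,s_{p})$ to be an integer; your argument already uses this implicitly and is sound.
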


In the rest of the section we deal with subsets of
$\mathbb{N}^{m+1}$ ($m$ is a positive integer) treated as a
Cartesian product $\mathbb{N}^{m}\times\mathbb{N}$ (so that the last
coordinate has a special meaning). If $a = (a_{1},\dots,
a_{m+1})\in\mathbb{N}^{m+1}$, we set $\ord_{1}a =
\D\sum_{i=1}^{m}a_{i}$ and $\ord_{2}a = a_{m+1}.$ Furthermore, we
treat $\mathbb{N}^{m+1}$ as a partially ordered set with respect to
the product order $\leq_{P}$.

\medskip

If $A\subseteq \mathbb{N}^{m+1}$, then $V_{A}$ will denote the set
of all elements $v\in\mathbb{N}^{m+1}$ such that there is no $a\in
A$ with $a\leq_{P}v$. Clearly, $v=(v_{1}, \dots , v_{m+1})\in V_{A}$
if and only if for any element $(a_{1},\dots , a_{m+1})\in A$, there
exists $i\in\mathbb{N}, 1\leq i\leq m+1$, such that $a_{i}
> v_{i}$. Furthermore, for any $r, s\in\mathbb{N}$, we set
$$A(r, s) = \{x = (x_{1},\dots, x_{m+1})\in A\,|\,\ord_{1}x\leq
r, \ord_{2}x\leq s\}.$$

The following theorem is a direct consequence of the corresponding
statement proved in \citep[Chapter 2]{KLMP99}; it generalizes the
well-known Kolchin's result on univariate numerical polynomials
associated with subsets of $\mathbb{N}^{m}$ (see \citep[Ch. 0, Lemma
17]{Ko73}).

\begin{theorem}
Let $A$ be a subset of $\mathbb{N}^{m+1}$. Then there exists a
numerical polynomial $\omega_{A}(t_{1}, t_{2})$ with the following
properties:

\smallskip

{\em (i)} \,  $\omega_{A}(r, s) = \Card\,V_{A}(r, s)$ for all
sufficiently large $(r, s)\in\mathbb{N}^{2}$. (As in Definition 2.2,
it means that there exist $r_{0}, s_{0}\in\mathbb{N}$ such that the
equality holds for all integers $r\geq r_{0}$, $s\geq s_{0}$; as
usual, $\Card M$ denotes the number of elements of a finite set
$M$).

\smallskip

{\em (ii)} \, $\deg_{t_{1}}\omega_{A}\leq m$ and
$\deg_{t_{2}}\omega_{A}\leq 1$ (so the total degree $\deg\omega_{A}$
of the polynomial does not exceed $m+1$).

{\em (iii)} \, $\deg\,\omega_{A} = m+1$ if and only if
$A=\emptyset$. In this case\, $\omega_{A}(t_{1}, t_{2}) =
\D{{t_{1}+m}\choose m}(t_{2}+1)$.

\smallskip

{\em (iv)} \, $\omega_{A}$ is a zero polynomial if and only if
$(0,\dots, 0)\in A$.
\end{theorem}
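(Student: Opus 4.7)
The plan is to reduce to the case of finite $A$ via a Dickson-type argument and then compute $|V_{A}(r,s)|$ by inclusion--exclusion. First I would apply Lemma 2.1 with $n=1$ to observe that every antichain in $(\mathbb{N}^{m+1},\leq_{P})$ is finite (an infinite antichain would contradict the existence of the strictly increasing subsequence produced by the lemma). Hence the set $A^{\ast}$ of $\leq_{P}$-minimal elements of $A$ is finite, and since clearly $V_{A}=V_{A^{\ast}}$, I may assume $A=\{a^{(1)},\dots,a^{(q)}\}$ is finite.

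With $A$ finite, set $W(r,s)=\{v\in\mathbb{N}^{m+1}:\ord_{1}v\leq r,\ \ord_{2}v\leq s\}$ and $U_{i}(r,s)=\{v\in W(r,s):a^{(i)}\leq_{P}v\}$. For each $I\subseteq\{1,\dots,q\}$ the intersection $\bigcap_{i\in I}U_{i}(r,s)$ consists of the $v\in W(r,s)$ with $v\geq_{P}a^{I}$, where $a^{I}$ is the componentwise maximum of the $a^{(i)}$ with $i\in I$ (and $a^{\emptyset}=0$). The substitution $v=a^{I}+w$ with $w\in\mathbb{N}^{m+1}$ gives
\begin{equation*}
\Bigl|\bigcap_{i\in I}U_{i}(r,s)\Bigr|=\binom{r-\ord_{1}a^{I}+m}{m}\bigl(s-\ord_{2}a^{I}+1\bigr)
\end{equation*}
whenever $r\geq\ord_{1}a^{I}$ and $s\geq\ord_{2}a^{I}$. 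Inclusion--exclusion then yields $|V_{A}(r,s)|=\sum_{I}(-1)^{|I|}\bigl|\bigcap_{i\in I}U_{i}(r,s)\bigr|$ for all sufficiently large $(r,s)$, and I define $\omega_{A}(t_{1},t_{2})$ to be the same alternating sum with $(r,s)$ replaced by $(t_{1},t_{2})$. This immediately gives (i), and since each summand has $\deg_{t_{1}}=m$ and $\deg_{t_{2}}=1$, also (ii).

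For (iii), the case $A=\emptyset$ contributes only the term $I=\emptyset$, giving $\omega_{\emptyset}=\binom{t_{1}+m}{m}(t_{2}+1)$. When $A\neq\emptyset$ I pick any $a\in A$; since $V_{A}\subseteq V_{\{a\}}$ one has $\omega_{A}(r,s)\leq\omega_{\{a\}}(r,s)$ for large $(r,s)$, so it suffices to show $\deg\omega_{\{a\}}\leq m$. This follows from the fact that the coefficients of $t_{1}^{m}$ in $\binom{t_{1}+m}{m}$ and in $\binom{t_{1}-\ord_{1}a+m}{m}$ are both $1/m!$, so the $t_{1}^{m}t_{2}$ term in $\omega_{\{a\}}=\binom{t_{1}+m}{m}(t_{2}+1)-\binom{t_{1}-\ord_{1}a+m}{m}(t_{2}-\ord_{2}a+1)$ cancels. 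Finally, (iv) is immediate: if $(0,\dots,0)\in A$ then every $v$ is dominated, so $V_{A}=\emptyset$ and $\omega_{A}=0$; otherwise $(0,\dots,0)\in V_{A}(r,s)$ for every $(r,s)$, whence $\omega_{A}(r,s)\geq 1$ eventually and $\omega_{A}\neq 0$.

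The step I expect to require the most care is the top-degree bookkeeping behind part (iii). The inequality $V_{A}\subseteq V_{\{a\}}$ only forces a pointwise upper bound on $\omega_{A}$, and to conclude $\deg\omega_{A}\leq m$ from it one must use that the $(t_{1},t_{2})$-bidegree pattern of (ii) permits total degree $m+1$ only through a nonzero $t_{1}^{m}t_{2}$ coefficient, which would make $\omega_{A}$ grow strictly faster than $\omega_{\{a\}}$ as $r,s\to\infty$ jointly. Once this observation is made, the remaining verifications are routine manipulations of binomial coefficients.
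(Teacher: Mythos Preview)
The paper does not actually prove this theorem; it is stated as ``a direct consequence of the corresponding statement proved in \citep[Chapter~2]{KLMP99}'' and the explicit formula is deferred to \citep[Proposition~2.2.11]{KLMP99} (quoted here as Theorem~2.6). Your proposal, by contrast, supplies a self-contained argument, so there is little to compare line by line. That said, your route---reduce to finite $A$ via Lemma~2.1 and then count $V_{A}(r,s)$ by inclusion--exclusion over the sets $U_{i}(r,s)$---is exactly the computation that produces formula~(2.2) of Theorem~2.6: your $\ord_{1}a^{I}$ and $\ord_{2}a^{I}$ are precisely the paper's $b_{\gamma}$ and $\bar{a}_{\gamma,m+1}$. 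So your argument is correct and recovers both Theorem~2.4 and Theorem~2.6 in one stroke, which is almost certainly what the cited source does as well.

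One small point to tighten in part~(iii): from the pointwise bound $\omega_{A}(r,s)\leq\omega_{\{a\}}(r,s)$ you want to conclude that the $t_{1}^{m}t_{2}$-coefficient of $\omega_{A}$ vanishes. Your phrasing (``would make $\omega_{A}$ grow strictly faster'') handles the case of a positive coefficient; if the coefficient were negative, $\omega_{A}(r,s)$ would eventually be negative along $r=s\to\infty$, contradicting $\omega_{A}(r,s)=|V_{A}(r,s)|\geq 0$. Adding that one sentence closes the case analysis cleanly.
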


\begin{definition}
The polynomial $\omega_{A}(t_{1}, t_{2})$ whose existence is stated
by Theorem 2.4 is called the dimension polynomial of the set
$A\subseteq\mathbb{N}^{m+1}$ associated with the orders $\ord_{1}$
and $\ord_{2}$.
\end{definition}

Note that if $A\subseteq\mathbb{N}^{m+1}$ and $A'$ is the set of all
minimal elements of $A$ with respect to the product order on
$\mathbb{N}^{m+1}$, then the set $A'$ is finite (it follows from
Lemma 2.1) and $\omega_{A}(t_{1}, t_{2}) = \omega_{A'}(t_{1},
t_{2})$. The following statement, which is a consequence of
\citep[Proposition 2.2.11]{KLMP99}, gives a closed-form formula for
the polynomial $\omega_{A}(t_{1}, t_{2})$ where $A$ is a finite
subset of $\mathbb{N}^{m+1}$ (therefore, it allows one to compute
the dimension polynomial of any subset of $\mathbb{N}^{m+1}$ after
finding the minimum points of this subset).

\begin{theorem}
Let $A = \{a_{1}, \dots, a_{q}\}$ be a finite subset of
$\mathbb{N}^{m+1}$. Let $a_{i} = (a_{i1}, \dots, a_{i,m+1})$ \,
($1\leq i\leq q$) and for any $l\in\mathbb{N}$, $0\leq l\leq q$, let
$\Gamma (l,q)$ denote the set of all $l$-element subsets of the set
$\{1,\dots, q\}$. Furthermore, for any $\gamma\in \Gamma (l,q)$, let
$\bar{a}_{\gamma j} = \max \{a_{ij} | i\in \gamma\}$ ($1\leq j\leq
m+1$) and $b_{\gamma} =\D\sum_{i=1}^{m}\bar{a}_{\gamma i}$. Then
\begin{equation} \omega_{A}(t_{1}, t_{2}) =
\D\sum_{l=0}^{q}(-1)^{l}\D\sum_{\gamma \in \Gamma (l,q)} {t_{1}+m -
b_{\gamma}\choose m}(t_{2}+1- \bar{a}_{\gamma, m+1}).
\end{equation}
\end{theorem}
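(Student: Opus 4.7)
The plan is to derive the formula by inclusion--exclusion applied to the ``dominance'' sets of the points of $A$, compute each cardinality in closed form, and invoke the uniqueness clause of Theorem 2.4 to identify the result with $\omega_{A}$.

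First I would fix $r,s\in\mathbb{N}$ and set $B(r,s)=\{v\in\mathbb{N}^{m+1}\,:\,\ord_{1}v\leq r,\ \ord_{2}v\leq s\}$. For each $i\in\{1,\dots,q\}$ let $U_{i}=\{v\in B(r,s)\,:\,a_{i}\leq_{P}v\}$; by definition of $V_{A}$ we have $V_{A}(r,s)=B(r,s)\setminus\bigcup_{i=1}^{q}U_{i}$. Writing $U_{\gamma}=\bigcap_{i\in\gamma}U_{i}$ (with $U_{\emptyset}=B(r,s)$), inclusion--exclusion gives
$$\Card V_{A}(r,s)=\sum_{l=0}^{q}(-1)^{l}\sum_{\gamma\in\Gamma(l,q)}\Card U_{\gamma}.$$
The key observation is that a tuple $v=(v_{1},\dots,v_{m+1})$ belongs to $U_{\gamma}$ if and only if $v_{j}\geq \bar{a}_{\gamma j}$ for every $j=1,\dots,m+1$; this is immediate from the definition of $\bar{a}_{\gamma j}$ as a componentwise maximum.

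Next I would compute $\Card U_{\gamma}$ explicitly by the change of variables $w_{j}=v_{j}-\bar{a}_{\gamma j}$. This reduces the count to tuples $(w_{1},\dots,w_{m})\in\mathbb{N}^{m}$ with $\sum_{j=1}^{m}w_{j}\leq r-b_{\gamma}$ combined with $w_{m+1}\in\{0,1,\dots,s-\bar{a}_{\gamma,m+1}\}$. For all sufficiently large $r,s$ (specifically, $r\geq b_{\gamma}$ and $s\geq\bar{a}_{\gamma,m+1}$ for every $\gamma$ with $|\gamma|\leq q$), the first factor equals $\binom{r+m-b_{\gamma}}{m}$ by the standard stars-and-bars count, and the second equals $s+1-\bar{a}_{\gamma,m+1}$, so
$$\Card U_{\gamma}={r+m-b_{\gamma}\choose m}\bigl(s+1-\bar{a}_{\gamma,m+1}\bigr).$$
Substituting into the inclusion--exclusion identity, grouping by $l=|\gamma|$, and regarding the right-hand side as a polynomial in $t_{1},t_{2}$, one obtains a numerical polynomial that agrees with $\Card V_{A}(r,s)$ for all sufficiently large $(r,s)\in\mathbb{N}^{2}$. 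Since Theorem 2.4 guarantees both the existence and the uniqueness of such a polynomial, this polynomial must coincide with $\omega_{A}(t_{1},t_{2})$.

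I do not expect a serious conceptual obstacle: the proof is essentially a single application of inclusion--exclusion followed by a routine count. The main technical point to watch is that the closed-form expression $\binom{t_{1}+m-b_{\gamma}}{m}(t_{2}+1-\bar{a}_{\gamma,m+1})$ continues to make sense as a polynomial in $(t_{1},t_{2})$ even when $r<b_{\gamma}$ or $s<\bar{a}_{\gamma,m+1}$, while the true value of $\Card U_{\gamma}$ is then $0$; however, because Theorem 2.4 only requires the equality $\omega_{A}(r,s)=\Card V_{A}(r,s)$ in the asymptotic region, this discrepancy off the polynomial range is harmless. The only bookkeeping to be careful about is ensuring that the bound on $(r,s)$ used for the substitution applies uniformly to all $\gamma\in\bigcup_{l}\Gamma(l,q)$, which is automatic since $A$ is finite.
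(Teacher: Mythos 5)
Your argument is correct: the identification $V_{A}(r,s)=B(r,s)\setminus\bigcup_{i}U_{i}$, the observation that $U_{\gamma}$ is the dominance set of the componentwise maximum $\bar{a}_{\gamma}$, the stars-and-bars count $\binom{r+m-b_{\gamma}}{m}(s+1-\bar{a}_{\gamma,m+1})$, and the appeal to uniqueness of a polynomial agreeing with given values for all large $(r,s)$ together give exactly the stated formula. The paper itself offers no proof, deferring to Proposition 2.2.11 of Kondrateva--Levin--Mikhalev--Pankratev, and your inclusion--exclusion computation is precisely the standard argument behind that cited result, so there is nothing to add.
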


\medskip

\begin{center}
BASIC NOTATION AND TERMINOLOGY ON DIFFERENCE-DIFFERENTIAL RINGS
\end{center}

\begin{definition} A commutative ring $R$ considered together with finite sets $\Delta =
\{\delta_{1},\dots, \delta_{m}\}$ and $\Sigma = \{\sigma_{1},\dots,
\sigma_{n}\}$ of derivations and injective endomorphisms of $R$,
respectively, such that any two mappings of the set
$\Delta\bigcup\Sigma$ commute, is called a difference-differential
ring.
\end{definition}
In what follows, we consider a special case when the set $\Sigma$
consists of a single endomorphism $\sigma$ called a {\em
translation}. Powers of $\sigma$ will be referred to as {\em
transforms}. The set $\Delta\bigcup\{\sigma\}$ is said to be the
{\em basic set} of the difference-differential ring $R$, which is
also called a $\Delta$-$\sigma$-ring. If $R$ is a field, it is
called a {\em difference-differential field} or a
$\Delta$-$\sigma$-field. We will often use prefix $\Delta$-$\sigma$-
instead of the adjective ''difference-differential''.

Let $T$ be the free commutative semigroup generated by the set
$\Delta\bigcup\{\sigma\}$, that is, the semigroup of all power
products $$\tau = \delta_{1}^{k_{1}}\dots
\delta_{m}^{k_{m}}\sigma^{l} \hspace{0.3in} (k_{i}, l\in
\mathbb{N}).$$ The numbers $\ord_{\Delta}\tau =
\D\sum_{i=1}^{m}k_{i}\,\,\, \text{and}\,\,\, \ord_{\sigma}\tau = l$
are called the {\em orders} of $\tau$ with respect to $\Delta$ and
$\sigma$, respectively.  For every $r, s\in \mathbb{N}$, we set
$$T(r, s) = \{\tau\in T\,|\,\ord_{\Delta}\tau\leq
r,\,\ord_{\sigma}\tau\leq s\}.$$ Furthermore, $\Theta$ will denote
the subsemigroup of $T$ generated by $\Delta$, so every element
$\tau\in T$ can be written as $\tau = \theta\sigma^{l}$ where
$\theta\in\Theta$, $l\in \mathbb{N}$. If $r\in\mathbb{N}$, we set
$\Theta(r) = \{\theta\in\Theta\,|\,\ord_{\Delta}\theta\leq r\}$.

\begin{definition}
A subring (ideal) $R_{0}$ of a $\Delta$-$\sigma$-ring $R$ is said to
be a difference-differential  (or $\Delta$-$\sigma$-) subring of $R$
(respectively, a difference-differential (or $\Delta$-$\sigma$-)
ideal of $R$) if $R_{0}$ is closed with respect to the action of any
operator of $\Delta\bigcup\sigma$. In this case the restriction of a
mapping from $\Delta\bigcup\sigma$ on $R_{0}$ is denoted by the same
symbol. If a prime ideal $P$ of $R$ is closed with respect to the
action of $\Delta\bigcup\sigma$, it is called a prime
difference-differential (or $\Delta$-$\sigma$-) ideal of $R$.
\end{definition}

\begin{definition}
If $R$ is a $\Delta$-$\sigma$-field and $R_{0}$ a subfield of $R$
which is also a $\Delta$-$\sigma$-subring of $R$, then  $R_{0}$ is
said to be a $\Delta$-$\sigma$-subfield of $R$; $R$, in turn, is
called a difference-differential (or $\Delta$-$\sigma$-) field
extension or a $\Delta$-$\sigma$-overfield of $R_{0}$. In this case
we also say that we have a $\Delta$-$\sigma$-field extension
$R/R_{0}$.
\end{definition}

If $R$ is a $\Delta$-$\sigma$-ring and $S\subseteq R$, then the
intersection of all $\Delta$-$\sigma$-ideals of $R$ containing the
set $S$ is, obviously, the smallest $\Delta$-$\sigma$-ideal of $R$
containing $S$. This ideal is denoted by $[S]$; as an ideal, it is
generated by all elements $\tau\eta$ where $\tau\in T$, $\eta\in S$.
(Here and below we frequently write $\tau\eta$ for $\tau(\eta)$
($\tau\in T$, $\eta\in R$).\,) If the set $S$ is finite, $S =
\{\eta_{1},\dots, \eta_{p}\}$, we say that the
$\Delta$-$\sigma$-ideal $I = [S]$ is finitely generated (in this
case we write $I = [\eta_{1},\dots, \eta_{p}]$) and call
$\eta_{1},\dots, \eta_{p}$ difference-differential (or
$\Delta$-$\sigma$-) generators of $I$.

\begin{definition}
A $\Delta$-$\sigma$-ideal $I$ of a $\Delta$-$\sigma$-ring $R$ is
called {\em reflexive} if the inclusion $\sigma^{k}(a)\in I$ ($k\in
\mathbb{N}$, $a\in R$) implies that $a\in I$.
\end{definition}

For any $\Delta$-$\sigma$-ideal $I$ of $R$, the set $I^{\ast} =
\{a\in R\,|\,\sigma^{k}(a)\in I$ for some $k\in \mathbb{N}\}$ is the
smallest reflexive $\Delta$-$\sigma$-ideal containing $I$; it is
called the {\em reflexive closure} of $I$ in $R$.

If $K_0$ is a $\Delta$-$\sigma$-subfield of a
$\Delta$-$\sigma$-field $K$ and $S\subseteq K$, then the
intersection of all $\Delta$-$\sigma$-subfields of $K$ containing
$K_0$ and $S$ is the unique $\Delta$-$\sigma$-subfield of $K$
containing $K_0$ and $S$ and contained in every
$\Delta$-$\sigma$-subfield of $K$ containing $K_0$ and $S$. It is
denoted by $K_{0}\langle S\rangle$. If $S$ is finite, $S =
\{\eta_{1},\dots,\eta_{n}\}$, then $K$ is said to be a finitely
generated $\Delta$-$\sigma$-extension of $K_{0}$ with the set of
$\Delta$-$\sigma$-generators $\{\eta_{1},\dots,\eta_{n}\}$. In this
case we write $K = K_{0}\langle \eta_{1},\dots,\eta_{n}\rangle$. As
a field, $K_{0}\langle \eta_{1},\dots,\eta_{n}\rangle$ coincides
with the field $K_0(\{\tau \eta_{i} | \tau\in T, 1\leq i\leq n\}$).

\begin{definition}
Let $R$ and $S$ be two difference-differential rings with the same
basic set $\Delta\bigcup\{\sigma\}$, so that elements of $\Delta$
and $\sigma$ act on each of the rings as derivations and an
endomorphism, respectively, and every two mapping of the set
$\Delta\bigcup\{\sigma\}$ commute.  Then a ring homomorphism
$\phi:R\longrightarrow S$ is called a difference-differential (or
$\Delta$-$\sigma$-) homomorphism if $\phi(\alpha a) = \alpha\phi(a)$
for any $\alpha\in \Delta\bigcup\{\sigma\}$, $a\in R$.
\end{definition}

If $\phi:R\longrightarrow S$ is a $\Delta$-$\sigma$-ring
homomorphism, then $\Ker\phi$ is a reflexive $\Delta$-$\sigma$-ideal
of $R$. Furthermore, if $J$ is a reflexive $\Delta$-$\sigma$-ideal
of $\Delta$-$\sigma$-ring $R$, then the factor ring $R/J$ has a
natural structure of a $\Delta$-$\sigma$-ring such that the
canonical epimorphism $R\rightarrow R/J$ is a
$\Delta$-$\sigma$-homomorphism.

If $K$ is a $\Delta$-$\sigma$-field and $Y =\{y_{1},\dots, y_{n}\}$
is a finite set of symbols, then one can consider a countable set of
symbols $TY = \{\tau y_{j}|\tau\in T, 1\leq j\leq n\}$ and the
polynomial ring $R = K[\{\tau y_{j}|\tau\in T,\, 1\leq j\leq n\}]$
in the set of indeterminates $TY$ over the field $K$. This
polynomial ring is naturally viewed as a $\Delta$-$\sigma$-ring
where $\alpha(\tau y_{j}) = (\alpha\tau)y_{j}$ for any
$\alpha\in\Delta\bigcup\{\sigma\}$, $\tau\in T$, $1\leq j\leq n$,
and the elements of $\Delta\bigcup\{\sigma\}$ act on the
coefficients of the polynomials of $R$ as they act on the field $K$.
The ring $R$ is called the {\em ring of difference-differential} (or
$\Delta$-$\sigma$-) {\em polynomials} in the set of
difference-differential ($\Delta$-$\sigma$-) indeterminates
$y_{1},\dots, y_{n}$ over $K$. This ring is denoted by
$K\{y_{1},\dots, y_{n}\}$ and its elements are called
difference-differential (or $\Delta$-$\sigma$-) polynomials.

\begin{definition}A $\Delta$-$\sigma$-ideal in the ring $K\{y_{1},\dots, y_{n}\}$
is called linear if it is generated (as a $\Delta$-$\sigma$-ideal)
by homogeneous linear $\Delta$-$\sigma$-polynomials (i. e.,
$\Delta$-$\sigma$-polynomials of the form
$\sum_{i=1}^{d}a_{i}\tau_{i}y_{k_{i}}$ where $a_{i}\in K$,
$\tau_{i}\in T$, $1\leq k_{i}\leq n$).
\end{definition}
\begin{definition}
Let $R$ be a $\Delta$-$\sigma$-ring and $\mathcal{U}$ a family of
elements of some $\Delta$-$\sigma$-overring of $R$. We say that
$\mathcal{U}$ is $\Delta$-$\sigma$-algebraically) dependent over
$R$, if the family $T\mathcal{U} = \{\tau u\,|\,\tau\in T,\, u\in
\mathcal{U}\}$ is algebraically dependent over $R$ (that is, there
exist elements $u_{1},\dots, u_{k}\in T\mathcal{U}$ and a nonzero
polynomial $f$ in $k$ variables with coefficients in $R$ such that
$f(u_{1},\dots, u_{k}) = 0$). Otherwise, the family $\mathcal{U}$ is
said to be $\Delta$-$\sigma$-{\em algebraically independent} over
$R$.
\end{definition}

\begin{definition}If $K$ is a $\Delta$-$\sigma$-field and $L$ a
$\Delta$-$\sigma$-field extension of $K$, then a set $B\subseteq L$
is said to be a $\Delta$-$\sigma$-transcendence basis of $L$ over
$K$ if $B$ is $\Delta$-$\sigma$-algebraically independent over $K$
and every element $a\in L$ is $\Delta$-$\sigma$-algebraic over
$K\langle B\rangle$ (it means that the set $\{\tau a\,|\,\tau\in
T\}$ is algebraically dependent over the field $K\langle B\rangle$).
\end{definition}

If $L$ is a finitely generated $\Delta$-$\sigma$-field extension of
$K$, then all $\Delta$-$\sigma$-transcendence bases of $L$ over $K$
are finite and have the same number of elements (see \citep[Theorem
3.5.38]{KLMP99}). This number is called the $\Delta$-$\sigma$-{\em
transcendence degree} of $L$ over $K$ (or the
$\Delta$-$\sigma$-transcendence degree of the extension $L/K$); it
is denoted by $\Delta$-$\sigma$-$\trdeg_{K}L$.

Let $K$ be a $\Delta$-$\sigma$-field $K$ and $L$ a finitely
generated $\Delta$-$\sigma$-extension of $K$ with a set of
$\Delta$-$\sigma$-generators $\eta = \{\eta_{1},\dots, \eta_{n}\}$,
$L = K\langle \eta_{1},\dots,\eta_{n}\rangle$. Then there exists a
natural $\Delta$-$\sigma$-homomorphism $\Phi_{\eta}$ of the ring of
$\Delta$-$\sigma$-polynomials $K\{y_{1},\dots, y_{n}\}$ onto the
$\Delta$-$\sigma$-subring $K\{\eta_{1},\dots,\eta_{n}\}$ of $L$ such
that $\Phi_{\eta}(a) = a$ for any $a\in K$ and $\Phi_{\eta}(y_{j}) =
\eta_{j}$ for $j = 1,\dots, n$. If $A$ is a
$\Delta$-$\sigma$-polynomial in $K\{y_{1},\dots, y_{n}\}$, then the
element $\Phi_{\eta}(A)$ is called the {\em value} of $A$ at $\eta$
and is denoted by $A(\eta)$. Obviously, the kernel $P$ of the
$\Delta$-$\sigma$-homomorphism $\Phi_{\eta}$ is a prime reflexive
$\Delta$-$\sigma$-ideal of $K\{y_{1},\dots, y_{n}\}$. This ideal is
called the {\em defining} ideal of $\eta$. If we consider the
quotient field $Q$ of the factor ring $\bar{R} = K\{y_{1},\dots,
y_{n}\}/P$ as a $\Delta$-$\sigma$-field (where
$\delta\left({\D\frac{u}{v}}\right) =
{\D\frac{v\delta(u)-u\delta(v)}{v^2}}$ and
$\sigma\left({\D\frac{u}{v}}\right) =
{\D\frac{\sigma(u)}{\sigma(v)}}$ for any $u, v\in \bar{R},\,
\delta\in\Delta$), then this quotient field is naturally
$\Delta$-$\sigma$-isomorphic to the field $L$. The
$\Delta$-$\sigma$-isomorphism of $Q$ onto $L$ is identical on $K$
and maps the canonical images of the
$\Delta$-$\sigma$-indeterminates $y_{1},\dots, y_{n}$ in
$K\{y_{1},\dots, y_{n}\}/P$ to the elements $\eta_{1},\dots,
\eta_{n}$, respectively.

\section{Reduction of $\Delta$-$\sigma$-polynomials. Characteristic sets}

Let $K$ be a difference-differential field with a basic set
$\Delta\bigcup\{\sigma\}$ ($\Delta = \{\delta_{1},\dots,
\delta_{m}\}$ is a set of derivations, $\sigma$ is an endomorphism
of $K$). Let $R = K\{y_{1},\dots, y_{n}\}$ be the ring of
$\Delta$-$\sigma$-polynomials in the set of
$\Delta$-$\sigma$-indeterminates $y_{1},\dots, y_{n}$ over $K$ and
let $TY$ denote the set of all elements $\tau y_{i}\in R$ ($\tau\in
T,\, 1\leq i\leq n$) called {\em terms}. If $u = \tau y_{i}\in TY$,
then the numbers $\ord_{\Delta}\tau$ and $\ord_{\sigma}\tau$ are
called the orders of the term $u$ with respect to $\Delta$ and
$\sigma$, respectively.

We will consider two total orders $<_{\Delta}$ and $<_{\sigma}$ on
the set of all terms $TY$ defined as follows:

If $u = \delta_{1}^{k_{1}}\dots \delta_{m}^{k_{m}}\sigma^{p}y_{i}$
and $v = \delta_{1}^{l_{1}}\dots \delta_{m}^{l_{m}}\sigma^{q}y_{j}$
($1\leq i, j\leq n$), then $u<_{\Delta} v$ (respectively,
$u<_{\sigma} v$) if the $(m+3)$-tuple $(\ord_{\Delta}u,
\ord_{\sigma}u, k_{1},\dots, k_{m}, i)$ is less than
$(\ord_{\Delta}v, \ord_{\sigma}v, l_{1},\dots, l_{m}, j)$
(respectively, $(\ord_{\sigma}u, \ord_{\Delta}u, k_{1},\dots, k_{m},
i)$ is less than the $(m+3)$-tuple $(\ord_{\sigma} v,
\ord_{\Delta}v, l_{1},\dots, l_{m}, j)$) with respect to the
lexicographic order on $\mathbb{N}^{m+3}$. We write $u\leq_{\Delta}
v$ if either $u <_{\Delta} v$ or $u=v$; the relation $\leq_{\sigma}$
is defined in the same way.

An element $\tau\in T$ is said to be divisible by an element
$\tau'\in T$ if $\tau = \tau''\tau'$ for some $\tau''\in T$. In this
case we write $\tau'\,|\,\tau$ and $\tau'' = \D\frac{\tau}{\tau'}$.

The least common multiple of elements $\tau_{1},\dots, \tau_{p}\in
T$, where $\tau_{i} = \delta_{1}^{k_{i1}}\dots
\delta_{m}^{k_{im}}\sigma^{l_{i}}$ ($1\leq i\leq p$) is defined as
$\tau = \delta_{1}^{d_{1}}\dots \delta_{m}^{d_{m}}\sigma^{l}$ with
$d_{j} = \max \{k_{1j},\dots, k_{pj}\}$  ($1\leq j\leq m$), $l =
\max\{l_{1},\dots, l_{p}\}$; it is denoted by $\lcm\{\tau_{1},\dots,
\tau_{p}\}$.

If $u = \tau_{1} y_{i},\, v = \tau_{2} y_{j}\in TY$, we say that $u$
divides $v$ and write $u\,|\,v$ if and only if $i=j$ and
$\tau_{1}\,|\,\tau_{2}$. In this case the ratio $\D\frac{v}{u}$ is
defined as ${\D\frac{\tau_{2}}{\tau_{1}}}$. If $u_{1} = \tau_{1}
y_{i}, \dots, u_{p} = \tau_{p} y_{i}$ are terms with the same
$\Delta$-$\sigma$-indeterminate $y_{i}$, then the least common
multiple of these terms, denoted by $\lcm(u_{1},\dots, u_{p})$, is
defined as $\lcm(\tau_{1},\dots, \tau_{p})y_{i}$.  If $i\neq j$,
then we set $\lcm(\tau_{1}y_{i}, \tau_{2}y_{j}) = 0$ ($\tau_{1},
\tau_{2}\in T$). The following statement is a consequence of Lemma
2.1.

\begin{lemma}
Let $S$ be any infinite set of terms in the ring $K\{y_{1},\dots,
y_{n}\}$. Then there exists an infinite sequence of terms $u_{1},\,
u_{2},\dots$ in $S$ such that $u_{k}\,|\,u_{k+1}$ for every $k=1, 2,
\dots $.
\end{lemma}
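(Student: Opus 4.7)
The plan is to reduce this to Lemma~2.1 by encoding each term as a point in $\mathbb{N}^{m+1}\times\mathbb{N}_{n}$, so that the divisibility relation on terms becomes the product order on the first $m+1$ coordinates combined with equality on the last coordinate.

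Concretely, I would define a map $\phi$ from the set of all terms $TY$ to $\mathbb{N}^{m+1}\times\mathbb{N}_{n}$ by sending $\tau y_i$ with $\tau=\delta_{1}^{k_{1}}\cdots\delta_{m}^{k_{m}}\sigma^{l}$ to $(k_{1},\dots,k_{m},l,i)$. This map is a bijection. Moreover, using the definitions of divisibility of elements of $T$ and of terms, one sees immediately that for $u,v\in TY$ one has $u\mid v$ if and only if the last coordinates of $\phi(u)$ and $\phi(v)$ agree and the first $m+1$ coordinates satisfy $\phi(u)\leq_{P}\phi(v)$ in $\mathbb{N}^{m+1}$.

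Now let $A=\phi(S)\subseteq\mathbb{N}^{m+1}\times\mathbb{N}_{n}$. Since $\phi$ is a bijection and $S$ is infinite, $A$ is infinite, so Lemma~2.1 (applied with the role of its $m$ played by our $m+1$) yields an infinite sequence $a_{1},a_{2},\dots$ of elements of $A$ which is strictly increasing in the product order on $\mathbb{N}^{m+1}$ and whose projections on $\mathbb{N}_{n}$ all coincide with some fixed index $i$. Setting $u_{k}=\phi^{-1}(a_{k})$, we obtain an infinite sequence in $S$ with $u_{k}\mid u_{k+1}$ for every $k\geq 1$, by the characterization of divisibility in the previous paragraph.

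There is no real obstacle here; the only thing one has to be a little careful about is to separate the index $i$ of the $\Delta$-$\sigma$-indeterminate (which must stay \emph{constant} along the chain, since divisibility of terms requires the same $y_{i}$) from the exponent data $(k_{1},\dots,k_{m},l)$ (which must be coordinate-wise nondecreasing). This is precisely the distinction that Lemma~2.1 is built to respect, by treating the $\mathbb{N}_{n}$ factor as a discrete ``color'' on which elements of the chain must agree.
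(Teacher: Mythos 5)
Your proof is correct and is exactly the reduction the paper intends: the paper states Lemma 3.1 only as ``a consequence of Lemma 2.1,'' and your encoding of a term $\delta_{1}^{k_{1}}\cdots\delta_{m}^{k_{m}}\sigma^{l}y_{i}$ as the point $(k_{1},\dots,k_{m},l,i)\in\mathbb{N}^{m+1}\times\mathbb{N}_{n}$, with divisibility matching the product order on the first $m+1$ coordinates and equality on the last, is the standard way to realize that consequence. Nothing is missing.
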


\begin{definition}If $A\in K\{y_{1},\dots, y_{n}\}\setminus K$, then the highest with
respect to the orderings  $<_{\Delta}$ and $<_{\sigma}$ terms that
appear in $A$ are called the $\Delta$-leader and the $\sigma$-leader
of $A$; they are denoted by $u_{A}$ and $v_{A}$, respectively. If
$A$ is written as a polynomial in one variable $v_{A}$, $A =
I_{d}(v_{A})^{d} + I_{d-1}(v_{A})^{d-1} + \dots + I_{0}$
($\Delta$-$\sigma$-polynomials $I_{d}, I_{d-1}, \dots, I_{0}$ do not
contain $v_{A}$), then $I_{d}$ is called the  leading coefficient of
$A$; the partial derivative $\partial A/\partial v_{A} =
dI_{d}(v_{A})^{d-1} + (d-1)I_{d-1}(v_{A})^{d-2} + \dots + I_{1}$ is
called the separant of $A$. The leading coefficient and the separant
of a $\Delta$-$\sigma$-polynomial $A$ are denoted by $I_{A}$ and
$S_{A}$, respectively.
\end{definition}
\begin{definition}
Let $A$ and $B$ be two $\Delta$-$\sigma$-polynomials in
$K\{y_{1},\dots, y_{n}\}$. We say that $A$ has lower rank than $B$
and write $\rk A < \rk B$ if either $A\in K$, $B\notin K$, or the
vector $(v_{A}, \deg_{v_{A}}A, \ord_{\Delta}u_{A})$ is less than
$(v_{B}, \deg_{v_{B}}B, \ord_{\Delta}u_{B})$ with respect to the
lexicographic order (where the terms $v_{A}$ and $v_{B}$ are
compared with respect to the order $<_{\sigma}$ and the other
coordinates are compared with respect to the natural order on
$\mathbb{N}$). If the two vectors are equal (or $A, B\in K$), we say
that the $\Delta$-$\sigma$-polynomials $A$ and $B$ are of the same
rank and write $\rk A = \rk B$.
\end{definition}

\begin{definition}
If $A, B\in K\{y_{1},\dots, y_{n}\}$, then $B$ is said to be {\bf
reduced} with respect to $A$ if

(i) $B$ does not contain terms $\tau v_{A}$ such that
$\ord_{\Delta}\tau > 0$ and $\ord_{\Delta}(\tau u_{A})\leq
\ord_{\Delta}u_{B}$.

\smallskip

(ii) If $B$ contains a term $\tau v_{A}$ where $\ord_{\Delta}\tau =
0$, then either $\ord_{\Delta} u_{B} < \ord_{\Delta}u_{A}$ or
$\ord_{\Delta} u_{A}\leq\ord_{\Delta}u_{B}$ and $\deg_{\tau v_{A}}B
<\deg_{v_{A}}A$.

\smallskip

If $B\in K\{y_{1},\dots, y_{n}\}$, then $B$ is said to be reduced
with respect to a set\, $\mathcal{A}\subseteq K\{y_{1},\dots,
y_{n}\}$ if $B$ is reduced with respect to every element of
$\mathcal{A}$.
\end{definition}
\noindent{\bf Remark.} It follows from the last definition that a
$\Delta$-$\sigma$-polynomial $B$ is not reduced with respect to a
$\Delta$-$\sigma$-polynomial $A$ ($A\notin K$) if either $B$
contains some term $\tau v_{A}$ such that $\ord_{\Delta}\tau > 0$
and $\ord_{\Delta}(\tau u_{A})\leq\ord_{\Delta}u_{B}$ or $B$
contains $\sigma^{i}v_{A}$ for some $i\in\mathbb{N}$ and in this
case $\ord_{\Delta}u_{A}\leq\ord_{\Delta}u_{B}$ and
$\deg_{v_{A}}A\leq\deg_{\sigma^{i}v_{A}}B$.

\begin{definition}
A set of $\Delta$-$\sigma$-polynomials $\mathcal{A}$ in the ring
$K\{y_{1},\dots, y_{n}\}$ is called {\bf autoreduced} if
$\mathcal{A}\bigcap K = \emptyset$ and every element of
$\mathcal{A}$ is reduced with respect to any other element of this
set.
\end{definition}

\begin{proposition}
Every autoreduced set in the ring of $\Delta$-$\sigma$-polynomials
is finite.
\end{proposition}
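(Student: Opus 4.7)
The plan is to argue by contradiction. Assume $\mathcal{A}$ is infinite; the strategy is to extract from $\mathcal{A}$ an infinite sequence along which some non-negative integer must strictly decrease, using Lemma 2.1 applied to a suitable encoding of the elements of $\mathcal{A}$ into $\mathbb{N}^{m+2}\times\mathbb{N}_{n}$.

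For $A\in\mathcal{A}$, write its $\sigma$-leader as $v_{A} = \delta_{1}^{k_{1}(A)}\cdots\delta_{m}^{k_{m}(A)}\sigma^{l(A)}y_{i(A)}$, and set $e_{A}=\sum_{j=1}^{m}k_{j}(A)=\ord_{\Delta}v_{A}$ and $d_{A}=\ord_{\Delta}u_{A}$; since $u_{A}$ maximizes the $\Delta$-order among the terms appearing in $A$, we have $d_{A}\geq e_{A}$. The key step will be to invoke Lemma 2.1 for the map
\[
\phi:\mathcal{A}\to\mathbb{N}^{m+2}\times\mathbb{N}_{n},\qquad \phi(A)=(d_{A}-e_{A},\,k_{1}(A),\ldots,k_{m}(A),\,l(A),\,i(A)).
\]
First I would check that $\phi$ is injective: if $\phi(A)=\phi(B)$ for distinct $A,B\in\mathcal{A}$, then $v_{A}=v_{B}$ and $d_{A}=d_{B}$, and applying clause (ii) of Definition 3.4 with $\tau=1$ to $(A,B)$ and to $(B,A)$ forces both $\deg_{v_{A}}B<\deg_{v_{A}}A$ and $\deg_{v_{A}}A<\deg_{v_{A}}B$, a contradiction. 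Hence $\phi(\mathcal{A})$ is an infinite subset of $\mathbb{N}^{m+2}\times\mathbb{N}_{n}$, and Lemma 2.1 yields an infinite sequence $A_{1},A_{2},\ldots$ in $\mathcal{A}$ with $\phi(A_{k})<_{P}\phi(A_{k+1})$ for all $k$ and with common index $i(A_{k})=i$.

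The next step is to exploit the product-order inequalities coordinate by coordinate. The non-decreasing $k_{j}$ and $l$ coordinates give $v_{A_{k}}\,|\,v_{A_{k+1}}$, so that $\tau_{k}:=v_{A_{k+1}}/v_{A_{k}}\in T$ satisfies $\ord_{\Delta}\tau_{k}=e_{A_{k+1}}-e_{A_{k}}$. The first-coordinate inequality $d_{A_{k}}-e_{A_{k}}\leq d_{A_{k+1}}-e_{A_{k+1}}$ rearranges into
\[
\ord_{\Delta}(\tau_{k}u_{A_{k}})=d_{A_{k}}+\ord_{\Delta}\tau_{k}\leq d_{A_{k+1}}=\ord_{\Delta}u_{A_{k+1}}.
\]
If any $\ord_{\Delta}\tau_{k}>0$, then $A_{k+1}$ contains the term $\tau_{k}v_{A_{k}}$ with $\ord_{\Delta}\tau_{k}>0$ and $\ord_{\Delta}(\tau_{k}u_{A_{k}})\leq\ord_{\Delta}u_{A_{k+1}}$, directly violating clause (i) of Definition 3.4 and contradicting autoreducedness. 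Hence each $\tau_{k}$ is a pure power of $\sigma$, and clause (ii) applies; since $d_{A_{k}}\leq d_{A_{k+1}}$ it forces $\deg_{v_{A_{k+1}}}A_{k+1}<\deg_{v_{A_{k}}}A_{k}$ for every $k$, producing the required infinite strictly decreasing sequence in $\mathbb{N}$.

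The main conceptual obstacle is choosing the correct first coordinate of $\phi$. Taking the \emph{difference} $d_{A}-e_{A}$ rather than $d_{A}$ alone is precisely what makes the inequality $\ord_{\Delta}(\tau_{k}u_{A_{k}})\leq\ord_{\Delta}u_{A_{k+1}}$ drop out automatically from product-order monotonicity, so that it matches the hypothesis of clause (i) of Definition 3.4; without this gap coordinate, the two clauses of the reduction definition cannot be dispatched from a single Lemma 2.1 application, which is exactly where the presence of two term orderings $<_{\Delta}$ and $<_{\sigma}$ makes the $\Delta$-$\sigma$ case more delicate than the classical differential or difference analogue.
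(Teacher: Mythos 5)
Your proof is correct, and it rests on the same two ingredients as the paper's: a Dickson-type extraction via Lemma 2.1 applied to tuples that record both the $\sigma$-leader and the gap $\ord_{\Delta}u_{A}-\ord_{\Delta}v_{A}$, followed by an infinite descent on $\deg_{v_{A}}A$. The packaging differs: the paper first disposes of the case of infinitely many elements sharing a $\sigma$-leader by a degree argument, then invokes Lemma 3.1 to get a divisibility chain of distinct $\sigma$-leaders and passes to a further subsequence along which $l_{i}-k_{i}$ is nondecreasing; your single injective encoding into $\mathbb{N}^{m+2}\times\mathbb{N}_{n}$ accomplishes both extractions in one application of Lemma 2.1, with the injectivity check absorbing the repeated-leader case. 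More importantly, your version is actually more complete than the paper's written argument. After deriving $\ord_{\Delta}\bigl(\tau u_{B_{i_{1}}}\bigr)\leq\ord_{\Delta}u_{B_{i_{2}}}$, the paper simply asserts that $\ord_{\Delta}\tau>0$ and appeals to clause (i) of Definition 3.4; but when $v_{B_{i_{2}}}/v_{B_{i_{1}}}$ is a pure power of $\sigma$ this fails and clause (i) is silent. Your explicit split --- clause (i) when $\ord_{\Delta}\tau_{k}>0$, clause (ii) plus the strictly decreasing chain $\deg_{v_{A_{1}}}A_{1}>\deg_{v_{A_{2}}}A_{2}>\cdots$ when every $\tau_{k}$ is a power of $\sigma$ (including $\tau_{k}=1$) --- is exactly the missing case analysis, and your observation that the gap coordinate makes the hypothesis of clause (i) fall out of product-order monotonicity is the right way to see why the two orderings coexist harmlessly here.
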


\begin{proof}
Suppose that $\mathcal{A}$ is an infinite autoreduced subset of
$K\{y_{1},\dots, y_{n}\}$. Then there is an infinite subset
$\mathcal{A}'$ of $\mathcal{A}$ such that all
$\Delta$-$\sigma$-polynomials in $\mathcal{A}'$ have distinct
$\sigma$-leaders. Indeed, otherwise there exists an infinite set
$\mathcal{A}_{1}\subseteq \mathcal{A}$ such that all
$\Delta$-$\sigma$-polynomials in $\mathcal{A}_{1}$ have the same
$\sigma$-leader $v$. Then the infinite set
$\{\ord_{\Delta}u_{A}\,|\, A\in \mathcal{A}_{1}\}$ contains a
nondecreasing infinite sequence $\ord_{\Delta}u_{A_{1}}\leq
\ord_{\Delta}u_{A_{2}} \leq\dots$. Since the sequence
$\{\deg_{v}A_{i} | i = 1, 2, \dots \}$ cannot be strictly
decreasing, there exists two indices $i$ and $j$ such that $i < j$
and $\deg_{v}A_{i} \leq \deg_{v}A_{j}$. We obtain that $A_{j}$ is
not reduced with respect to $A_{i}$ that contradicts the fact that
$\mathcal{A}$ is an autoreduced set.

Thus, we can assume that all leaders of our infinite autoreduced set
$\mathcal{A}$ are distinct. By Lemma 3.1, there exists an infinite
sequence $B_{1}, B_{2}, \dots $ \, of elements of $\mathcal{A}$ such
that $v_{B_{i}}\,|\,v_{B_{i+1}}$ for all $i = 1, 2, \dots $. (Also,
since the leaders of elements of our sequence are distinct,
${\D\frac{v_{B_{i+1}}}{v_{B_{i}}}}\neq 1$.)

Let $k_{i} = \ord_{\Delta }v_{B_{i}}$ and  $l_{i} =
\ord_{\Delta}u_{B_{i}}$. Since $u_{B_{i}}$ is the $\Delta$-leader of
$B_{i}$, $l_{i}\geq k_{i}$ ($i = 1, 2,\dots $), so that the infinite
set $\{l_{i}-k_{i}\,|\,i\in\mathbb{N},\, i\geq 1\}$ contains a
nondecreasing sequence $l_{i_{1}}-k_{i_{1}},\,
l_{i_{2}}-k_{i_{2}},\dots$. Then
$\ord_{\Delta}(\frac{v_{B_{i_{2}}}}{v_{B_{i_{1}}}} u_{B_{i_{1}}}) =
k_{i_{2}} - k_{i_{1}} + l_{i_{1}}\leq k_{i_{2}} + l_{i_{2}} -
k_{i_{2}} = l_{i_{2}} = \ord_{\Delta}u_{B_{i_{2}}}$. It follows that
$B_{i_{2}}$ contains a term $\tau v_{B_{i_{1}}} = v_{B_{i_{2}}}$
such that $\ord_{\Delta}\tau > 0$ and $\ord_{\Delta}(\tau
u_{B_{i_{1}}})\leq\ord_{\Delta}u_{B_{i_{2}}}$. Thus, the
$\Delta$-$\sigma$-polynomial $B_{i_{2}}$ is reduced with respect to
$B_{i_{1}}$ that contradicts the fact that $\mathcal{A}$ is an
autoreduced set.
\end{proof}

The proof of the following statement is similar to the proof of the
reduction theorem for difference-differential polynomials in the
case of classical autoreduced sets, see \citep{Cohn70}.

\begin{proposition} Let $\mathcal{A}$ be an
autoreduced set in the ring of $\Delta$-$\sigma$-polynomials
$K\{y_{1},\dots, y_{n}\}$ and let $B\in K\{y_{1},\dots, y_{n}\}$.
Then there exist $\Delta$-$\sigma$-polynomials $B_{0}$ and $H$ such
that $B_{0}$ is reduced with respect to $\mathcal{A}$, $\rk
B_{0}\leq \rk B$, $H$ is a finite product of transforms of separants
and leading coefficients of elements of $\mathcal{A}$, and $HB\equiv
B_{0} \,(mod [\mathcal{A}])$ (that is, $HB-B_{0}\in [\mathcal{A}]$).
\end{proposition}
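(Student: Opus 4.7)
The plan is to construct $B_0$ via an iterative reduction procedure that, at each step, eliminates the $<_{\sigma}$-highest term of $B$ obstructing reducedness with respect to some element of $\mathcal{A}$. I would induct on the lexicographic pair (this $<_{\sigma}$-maximal offending term of $B$, its degree in $B$); this is well-founded because $<_{\sigma}$ well-orders $TY$ (any strictly $<_{\sigma}$-descending sequence is eventually constant in $\ord_{\sigma}$, then in $\ord_{\Delta}$, and then in the remaining lexicographic coordinates, and so must stabilize). If no offending term exists, $B$ is already reduced and I take $B_{0} = B$, $H = 1$.

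Otherwise let $w$ be the $<_{\sigma}$-maximal term of $B$ violating reducedness with respect to some $A \in \mathcal{A}$, and write $w = \tau v_{A}$. In the derivation case $\tau = \theta\sigma^{p}$ with $\theta\in\Theta$, $\ord_{\Delta}\theta\geq 1$, and $\ord_{\Delta}(\tau u_{A}) \leq \ord_{\Delta}u_{B}$; applying $\theta$ to $A = I_{d}v_{A}^{d}+\cdots+I_{0}$ by the Leibniz rule and then applying $\sigma^{p}$ yields
\[
\tau A \;=\; \sigma^{p}(S_{A})\,w + R',
\]
where every term of $R'$ is strictly $<_{\sigma}w$: the derivatives of $v_{A}$ obtained by splitting $\theta$ over several $v_{A}$-factors have the form $\theta' v_{A}$ with $\theta'\in\Theta$ a proper divisor of $\theta$, hence share $\ord_{\sigma}$ with $\theta v_{A}$ but have strictly smaller $\ord_{\Delta}$; and the terms arising from $\Delta$-derivatives of the coefficients $I_{j}$ are images under some $\delta^{\alpha}$ of terms strictly $<_{\sigma} v_{A}$ (since $v_{A}$ is the $\sigma$-leader of $A$), and one checks directly that $\delta^{\alpha}$ and $\sigma^{p}$ preserve $<_{\sigma}$. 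Setting $e = \deg_{w}B$ and iteratively substituting $\sigma^{p}(S_{A})w \equiv -R' \pmod{[\tau A]}$ into $(\sigma^{p}S_{A})^{e}B$ gives $B^{(1)}$ in which $w$ does not occur and all new terms are $<_{\sigma}w$. In the pure-translation case $\tau = \sigma^{i}$ with $\deg_{w}B \geq d := \deg_{v_{A}}A$ and $\ord_{\Delta}u_{A} \leq \ord_{\Delta}u_{B}$, the polynomial $\sigma^{i}A$ has degree $d$ in $w$ with leading coefficient $\sigma^{i}(I_{A})$, so ordinary pseudo-division in $w$ produces $B^{(1)}$ with $\deg_{w}B^{(1)} < d$ after multiplying $B$ by $(\sigma^{i}I_{A})^{e-d+1}$ and subtracting an appropriate multiple of $\sigma^{i}A$.

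In both cases every new term in $B^{(1)}$ is $\leq_{\sigma} w \leq_{\sigma} v_{B}$, and the $\Delta$-order hypothesis ensures that no new term has $\ord_{\Delta}$ exceeding $\ord_{\Delta}u_{B}$; therefore $\rk B^{(1)} \leq \rk B$. The induction invariant also strictly decreases: in the derivation case $w$ is eliminated from the offending set, and in the pure-translation case $\deg_{w}B$ drops strictly, so even if $w$ persists and still offends some $A'\in\mathcal{A}$ whose leader is a $\sigma$-transform of $v_{A}$, only finitely many further reductions at $w$ can occur. After finitely many iterations I reach $B_{0}$ reduced with respect to $\mathcal{A}$; the accumulated multiplier $H$ is a finite product of factors of the form $\sigma^{p}(S_{A})$ and $\sigma^{i}(I_{A})$, i.e.\ of transforms of separants and leading coefficients of elements of $\mathcal{A}$, and $HB \equiv B_{0} \pmod{[\mathcal{A}]}$ by construction. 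The main delicacy I expect is the bookkeeping in the derivation case: verifying rigorously that after the Leibniz expansion and substitution all introduced terms really lie $<_{\sigma}w$ is the ingredient that drives the whole induction.
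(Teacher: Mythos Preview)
Your proof is correct and is precisely the standard Ritt--Kolchin--Cohn reduction argument that the paper defers to (the paper gives no detailed proof, only citing \cite{Cohn70} for the analogous classical statement). The two-case reduction---using a transform $\sigma^{p}(S_{A})$ of the separant when $w=\tau v_{A}$ with $\ord_{\Delta}\tau>0$, and pseudo-division by a transform $\sigma^{i}(I_{A})$ of the leading coefficient when $w=\sigma^{i}v_{A}$---together with your well-founded induction on the pair $(w,\deg_{w}B)$ ordered lexicographically, is exactly the adaptation the paper has in mind.
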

With the notation of the last proposition, we say that the
$\Delta$-$\sigma$-polynomial $B$ {\em reduces to $B_{0}$} modulo
$\mathcal{A}$.

Throughout the rest of the paper, while considering an autoreduced
set $\mathcal{A} = \{A_{1},\dots, A_{p}\}$ in the ring
$K\{y_{1},\dots, y_{n}\}$ we always assume that its elements are
arranged in order of increasing rank,  $rk\,A_{1}< \dots <
rk\,A_{p}$.

\begin{definition}
If $\mathcal{A} = \{A_{1},\dots,A_{p}\}$, $\mathcal{B} =
\{B_{1},\dots, B_{q}\}$ are  two autoreduced sets of
$\Delta$-$\sigma$-polynomials $K\{y_{1},\dots, y_{n}\}$, we say that
$\mathcal{A}$ has lower rank than $\mathcal{B}$ if one of the
following two cases holds:

(1) There exists $k\in\mathbb{N}$ such that $k\leq \min \{p,q\}$,
$\rk A_{i}= \rk B_{i}$ for $i=1,\dots,k-1$ and  $\rk A_{k} < \rk
B_{k}$.

(2) $p > q$ and  $\rk A_{i} = \rk B_{i}$ for $i=1,\dots,q$.

If $p = q$ and $\rk A_{i} = \rk B_{i}$ for $i=1,\dots,p$, then
$\mathcal{A}$ is said to have the same rank as $\mathcal{B}$. In
this case we write $\rk\mathcal{A} = \rk\mathcal{B}$.
\end{definition}

Repeating the arguments of the proof of the corresponding result for
autoreduced sets of differential polynomials (see \citep[Ch. I,
Proposition 3]{Ko73}) we obtain the following statement.

\begin{proposition}
In every nonempty family of autoreduced sets of differential
polynomials there exists an autoreduced set of lowest rank.
\end{proposition}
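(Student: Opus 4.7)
The plan is to adapt Kolchin's classical argument (cf.\ \citep[Ch.\ I, Prop.\ 3]{Ko73}), the two essential ingredients being the well-ordering of ranks of single polynomials and Proposition 3.7 (every autoreduced set is finite). First I would verify that the rank defined in Definition 3.3, namely the triple $(v_A,\deg_{v_A}A,\ord_\Delta u_A)$ ordered lexicographically, takes values in a well-ordered set: the order $<_\sigma$ on $TY$ is itself the lexicographic order on tuples of natural numbers (with the index coordinate bounded by $n$), hence a well-ordering, and a finite lexicographic product of well-orderings is well-ordered. Thus every nonempty collection of $\Delta$-$\sigma$-polynomials contains one of minimal rank.

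Given a nonempty family $\mathcal{F}$ of autoreduced sets, I would construct a sequence $\mathcal{F}=\mathcal{F}_{0},\mathcal{F}_{1},\mathcal{F}_{2},\ldots$ inductively. Setting $\mathcal{F}_{k}^{+}:=\{\mathcal{A}\in\mathcal{F}_{k}:|\mathcal{A}|>k\}$, if $\mathcal{F}_{k}^{+}=\emptyset$ I stop: every element of $\mathcal{F}_{k}$ then has length exactly $k$ and the same ranks at positions $1,\ldots,k$, so all have the common rank of $\mathcal{F}_{k}$; moreover this rank is minimum in $\mathcal{F}$, because an $\mathcal{A}'\in\mathcal{F}$ that leaves the chain at some stage $j$ does so either because $|\mathcal{A}'|=j-1<|\mathcal{A}|$, in which case clause (2) of Definition 3.8 places $\mathcal{A}$ strictly below $\mathcal{A}'$, or because $\rk A'_{j}>r_{j}$, in which case clause (1) does. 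Otherwise I let $r_{k+1}:=\min\{\rk A_{k+1}:\mathcal{A}\in\mathcal{F}_{k}^{+}\}$ (available by the well-ordering of ranks) and set $\mathcal{F}_{k+1}:=\{\mathcal{A}\in\mathcal{F}_{k}^{+}:\rk A_{k+1}=r_{k+1}\}$.

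If this process never halts, I pick any $\mathcal{A}^{(k)}\in\mathcal{F}_{k}$ for each $k\geq 1$ and form $A^{\ast}_{k}:=A_{k}^{(k)}$; since the elements of each $\mathcal{A}^{(k)}$ are listed in order of increasing rank, the ranks $r_{1}<r_{2}<\cdots$ are strictly increasing, so in particular the $A^{\ast}_{k}$ are pairwise distinct. To contradict Proposition 3.7 it then suffices to show that $\{A^{\ast}_{k}:k\geq 1\}$ is autoreduced. The key and most delicate point, which I view as the main technical obstacle, is the observation that in Definition 3.4 the property ``$B$ is reduced with respect to $A$'' depends on $A$ only through the data $v_{A}$, $\ord_{\Delta}u_{A}$, and $\deg_{v_{A}}A$, i.e.\ only through $\rk A$; this needs a careful reading of clauses (i) and (ii). Once it is established, for any $i<j$ the polynomials $A^{\ast}_{i}$ and $A^{\ast}_{j}$ have the same ranks as $A_{i}^{(j)}$ and $A_{j}^{(j)}$ inside the autoreduced set $\mathcal{A}^{(j)}$, so mutual reducedness of $A_{i}^{(j)}$ and $A_{j}^{(j)}$ transfers to $A^{\ast}_{i}$ and $A^{\ast}_{j}$, making the infinite collection $\{A^{\ast}_{k}\}$ autoreduced and contradicting Proposition 3.7.
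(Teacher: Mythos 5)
The paper gives no proof of this proposition beyond a pointer to Kolchin's argument, and your construction is exactly the intended one: descend through the family by minimizing the rank of the $k$-th element, and rule out non-termination by exhibiting an infinite autoreduced set, contradicting Proposition 3.7. Your key observation --- that ``$B$ is reduced with respect to $A$'' depends on $A$ only through the triple $(v_A,\deg_{v_A}A,\ord_\Delta u_A)$, i.e.\ through $\rk A$ --- is correct (both clauses of Definition 3.4 refer to $A$ only via $v_A$, $\ord_\Delta u_A$ and $\deg_{v_A}A$), and it settles one of the two directions: since $A_j^\ast=A_j^{(j)}$ is reduced with respect to $A_i^{(j)}$ and $\rk A_i^{(j)}=\rk A_i^\ast$, the polynomial $A_j^\ast$ is reduced with respect to $A_i^\ast$.

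The other direction does not follow from that observation: reducedness of $B$ with respect to $A$ depends on $B$ through its actual terms, not merely through $\rk B$, so you cannot replace $A_i^{(j)}$ by the different polynomial $A_i^\ast$ of the same rank in the statement ``$A_i^{(j)}$ is reduced with respect to $A_j^{(j)}$.'' The gap is small and is closed by a separate remark, which the paper itself invokes in the proof of Proposition 3.11: if $\rk A<\rk B$ then $A$ is automatically reduced with respect to $B$. Indeed, $\rk A<\rk B$ gives $v_A\leq_\sigma v_B$, every term of $A$ is $\leq_\sigma v_A$, and $\tau v_B>_\sigma v_B$ for every $\tau\neq 1$; hence the only term of the form $\tau v_B$ that $A$ can contain is $v_B$ itself (and only when $v_A=v_B$), in which case the lexicographic comparison of the remaining rank components forces one of the two alternatives in Definition 3.4(ii). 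Applying this with $A=A_i^\ast$ and $B=A_j^\ast$ (the ranks $r_1<r_2<\cdots$ being strictly increasing) completes the verification that $\{A_k^\ast\}$ is autoreduced, and the rest of your argument stands.
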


This statement shows that if $J$ is a $\Delta$-$\sigma$-ideal (or
even a nonempty subset) of the ring of $\Delta$-$\sigma$-polynomials
$K\{y_{1},\dots, y_{n}\}$, then $J$ contains an autoreduced subset
of lowest rank.(Clearly, the set of all autoreduced subsets of $J$
is not empty: if $A\in J$, then $\{A\}$ is an autoreduced subset of
$J$.)

\begin{definition}
If $J$ is a nonempty subset (in particular, a
$\Delta$-$\sigma$-ideal) of the ring of
$\Delta$-$\sigma$-polynomials $K\{y_{1},\dots, y_{n}\}$, then an
autoreduced subset of $J$ of lowest rank is called a {\bf
characteristic set} of $J$.
\end{definition}

\begin{proposition}
Let $\mathcal{A} = \{A_{1}, \dots , A_{p}\}$ be a characteristic set
of a nonempty subset $J$ of the ring of
$\Delta$-$\sigma$-polynomials $R=K\{y_{1},\dots, y_{n}\}$.  Then an
element $B\in J$ is reduced with respect to $\mathcal{A}$ if and
only if $B = 0$. In particular, the separants and leading
coefficients of elements of $\mathcal{A}$ do not lie in $J$.
\end{proposition}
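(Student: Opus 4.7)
The plan is to establish the nontrivial direction by contradiction: assuming some $B\in J$ is nonzero and reduced with respect to $\mathcal{A}$, I will construct an autoreduced subset $\mathcal{A}'\subseteq J$ of strictly lower rank than $\mathcal{A}$, contradicting the minimality built into the definition of a characteristic set. The converse direction is trivial, since $0$ contains no terms and so satisfies Definition 3.4 vacuously against every element of $\mathcal{A}$.

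Suppose then that $B\in J$ is nonzero and reduced with respect to every element of $\mathcal{A}$; I may assume $B\notin K$. I first observe that $\rk B\neq \rk A_i$ for any $i$: equality would mean $v_B = v_{A_i}$, $\deg_{v_B}B = \deg_{v_{A_i}}A_i$, and $\ord_{\Delta}u_B = \ord_{\Delta}u_{A_i}$, so the variable $v_{A_i}$ appears in $B$ with the $\tau=1$ hypothesis of clause (ii) of Definition 3.4 satisfied and neither of its disjuncts holding, contradicting the reducedness of $B$ with respect to $A_i$. Accordingly, let $k\in\{0,1,\dots,p\}$ be the largest index for which $\rk A_i<\rk B$ holds for all $i\leq k$ (with $k=0$ if $\rk B<\rk A_1$), and set $\mathcal{A}' = \{A_1,\dots,A_k,B\}$.

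I then need to check (a) that $\mathcal{A}'$ is autoreduced and (b) that $\rk\mathcal{A}'<\rk\mathcal{A}$. Mutual reducedness among $A_1,\dots,A_k$ is inherited from $\mathcal{A}$, and the reducedness of $B$ with respect to each $A_i$ is given. The crucial remaining check is that each $A_i$ is reduced with respect to $B$, and here the inequality $\rk A_i<\rk B$ is used: if $v_{A_i}<_{\sigma}v_B$, then every $\tau v_B$ with $\tau\in T$ exceeds $v_{A_i}$ in $<_{\sigma}$ and hence cannot occur in $A_i$, so both clauses of Definition 3.4 are vacuously satisfied; if instead $v_{A_i} = v_B$, the only candidate variable is $v_B$ itself (any $\tau\neq 1$ strictly increases the $<_{\sigma}$ order), and the rank inequality forces either $\deg_{v_{A_i}}A_i<\deg_{v_B}B$ or $\ord_{\Delta}u_{A_i}<\ord_{\Delta}u_B$, each of which supplies the relevant disjunct of clause (ii). The rank comparison is immediate from Definition 3.8: if $k<p$, clause (1) applies since $\rk B<\rk A_{k+1}$; if $k=p$, then $\mathcal{A}'=\mathcal{A}\cup\{B\}$ properly extends $\mathcal{A}$ with matching initial ranks, so clause (2) applies. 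In either case, $\mathcal{A}'\subseteq J$ is autoreduced and of strictly lower rank, contradicting the characteristic-set property of $\mathcal{A}$.

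For the \emph{in particular} assertion, I would observe that for each $k$ both $S_{A_k}$ and $I_{A_k}$ are nonzero ($S_{A_k}$ because $\Char K=0$ and $\deg_{v_{A_k}}A_k\geq 1$; $I_{A_k}$ by construction) and of rank strictly less than $A_k$: $\deg_{v_{A_k}}S_{A_k} = \deg_{v_{A_k}}A_k-1$, and $I_{A_k}$ does not involve $v_{A_k}$ at all. The variables occurring in $S_{A_k}$ and $I_{A_k}$ are subsets of those occurring in $A_k$, whence in particular $\ord_{\Delta}u_{S_{A_k}},\ord_{\Delta}u_{I_{A_k}}\leq\ord_{\Delta}u_{A_k}$, and any violation of reducedness with respect to some $A_j$ ($j\neq k$) would lift to a violation for $A_k$ itself; reducedness with respect to $A_k$ is immediate from the degree consideration and the absence of $v_{A_k}$. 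Applying the main claim to each of $S_{A_k}$ and $I_{A_k}$ then shows that neither can lie in $J$. The principal technical obstacle is the verification that each $A_i$ is reduced with respect to $B$, since it requires matching both clauses of Definition 3.4 against the several subcases produced by the inequality $\rk A_i<\rk B$; everything else reduces to routine manipulation of ranks.
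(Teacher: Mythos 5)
Your proof is correct and follows essentially the same route as the paper's: assuming a nonzero reduced $B\in J$, you form the autoreduced set $\{A_{1},\dots,A_{k},B\}$ and derive a contradiction with the minimality of $\rk\mathcal{A}$ via Definition 3.8. You additionally spell out details the paper leaves implicit (that $\rk B\neq\rk A_{i}$, the case analysis showing each $A_{i}$ is reduced with respect to $B$, and the deduction of the ``in particular'' clause for separants and leading coefficients), all of which check out.
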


\begin{proof}
First of all, note that if $B\neq 0$ and $\rk B < \rk A_{1}$, then
$\{B\}$ is an autoreduced set and $\rk\{B\} < \rk\mathcal{A}$ that
contradicts the fact that $\mathcal{A}$ is a characteristic set of
$J$. Let $\rk B > \rk A_{1}$ and let $A_{1},\dots, A_{j}$ ($1\leq
j\leq p$) be all elements of $\mathcal{A}$ whose rank is lower that
the rank of $B$. Then the set $\mathcal{A}' = \{A_{1},\dots, A_{j},
B\}$ is autoreduced. Indeed, the $\Delta$-$\sigma$-polynomials
$A_{1},\dots, A_{j}$ are reduced with respect to each other and $B$
is reduced with respect to the set $\{A_{1},\dots, A_{j}\}$, since
$B$ is reduced with respect to $\mathcal{A}$. Furthermore, each
$A_{i}$ ($1\leq i\leq j$) is reduced with respect to $B$ because
$\rk A_{i} < \rk B$. By the choice of $B$, if $j < p$, then $\rk B <
\rk A_{j+1}$, so $\rk\mathcal{A}' < \rk\mathcal{A}$; if $j=p$, then
we still have the inequality $\rk\mathcal{A}' < \rk\mathcal{A}$ by
the second part of Definition 3.8. It follows that  $\mathcal{A}$ is
not a characteristic set of $J$, contrary to our assumption. Thus,
$B = 0$.
\end{proof}

\begin{definition}
Let $\mathcal{A} = \{A_{1},\dots, A_{p}\}$ be an autoreduced set in
the ring $K\{y_{1},\dots, y_{n}\}$ such that all
$\Delta$-$\sigma$-polynomials $A_{i}$ ($1\leq i\leq p$) are linear
and homogeneous. Then the set $\mathcal{A}$ is said to be coherent
if it satisfies the following two conditions.

(i)\, $\tau A_{i}$ reduces to zero modulo $\mathcal{A}$ for any
$\tau\in T, \,1\leq i\leq p$.

(ii)\, For every $A_{i},\, A_{j}\in\mathcal{A}$ , $1\leq i < j\leq
p$, let $w = \lcm\{v_{A_{i}}, v_{A_{j}}\}$ and let $\tau' =
{\D\frac{w}{v_{A_{i}}}}$, $\tau'' = {\D\frac{w}{v_{A_{j}}}}$. Then
the $\Delta$-$\sigma$-polynomial $(\tau''I_{A_{j}})(\tau'A_{i}) -
(\tau'I_{A_{i}})(\tau''A_{j})$ reduces to zero modulo $\mathcal{A}$.
\end{definition}

The proof of the following statement can be obtained by mimicking
the proof of the corresponding result for autoreduced sets of
difference polynomials, see \citep[Theorem 6.5.3]{KLMP99}.

\begin{proposition}
Every characteristic set of a linear $\Delta$-$\sigma$-ideal in the
ring of  $\Delta$-$\sigma$-polynomials $K\{y_{1},\dots, y_{n}\}$ is
a coherent autoreduced set. Conversely, if $\mathcal{A}$ is a
coherent autoreduced set in $K\{y_{1},\dots, y_{n}\}$ consisting of
linear homogeneous $\Delta$-$\sigma$-polynomials, then $\mathcal{A}$
is a characteristic set of the linear $\Delta$-$\sigma$-ideal
$[\mathcal{A}]$.
\end{proposition}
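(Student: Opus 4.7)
My plan is to treat the forward direction (characteristic set $\Rightarrow$ coherent) as a short consequence of Propositions 3.6 and 3.10, and to concentrate on the converse, whose heart will be a Buchberger-type key lemma. For the forward direction, I would assume $\mathcal{A} = \{A_{1}, \dots , A_{p}\}$ is a characteristic set of a linear $\Delta$-$\sigma$-ideal $J$. Because each $A_{i}$ is linear homogeneous, both $I_{A_{i}}$ and $S_{A_{i}}$ lie in $K^{\ast}$. To verify condition (i) of Definition 3.11 I would fix $\tau\in T$ and $A_{i}\in\mathcal{A}$, note $\tau A_{i}\in J$, and apply Proposition 3.6 to obtain $B_{0}$ reduced modulo $\mathcal{A}$ and an element $H$, a product of transforms of the $I_{A_{j}}$ and $S_{A_{j}}$, with $H\tau A_{i} - B_{0}\in [\mathcal{A}]\subseteq J$; since $H\in K^{\ast}$, this yields $B_{0}\in J$ reduced modulo the characteristic set, and Proposition 3.10 forces $B_{0}=0$. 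Condition (ii) is identical, since $(\tau''I_{A_{j}})(\tau'A_{i}) - (\tau'I_{A_{i}})(\tau''A_{j})$ lies in $J$ by construction.

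For the converse, let $\mathcal{A}$ be a coherent autoreduced set of linear homogeneous $\Delta$-$\sigma$-polynomials and put $J = [\mathcal{A}]$. The crux will be the \emph{key lemma}: every element of $J$ reduces to zero modulo $\mathcal{A}$. I plan to prove this in Buchberger style: given $C\in J$, write $C = \sum_{k} P_{k}\tau_{k}A_{i_{k}}$ with $P_{k}\in R$ and $\tau_{k}\in T$, and pick among all such representations one that minimizes the maximum $\sigma$-leader $u$ of the summands $P_{k}\tau_{k}A_{i_{k}}$. If a single $A_{i}$ produces all summands with leader $u$, coherence condition (i) lets me rewrite $\tau_{k}A_{i_{k}}$ as a combination of strictly smaller $\sigma$-leaders, contradicting minimality; if two distinct $A_{i}$ and $A_{j}$ contribute, the overlap at $\lcm\{v_{A_{i}}, v_{A_{j}}\}$ is eliminated by coherence condition (ii), again lowering $u$. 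Hence $u = v_{C}$, and $C$ admits a summand witnessing reducibility modulo $\mathcal{A}$; a standard induction on rank, with termination provided by Lemma 3.1, then completes the reduction to zero. Granted the lemma, I assume for contradiction that $\mathcal{A}$ is not a characteristic set of $J$, obtaining an autoreduced $\mathcal{B} = \{B_{1}, \dots , B_{q}\}\subseteq J$ with $\rk\mathcal{B}<\rk\mathcal{A}$. In Case (1) of Definition 3.8 the key lemma reduces $B_{k}$ to zero modulo $\mathcal{A}$, yet $B_{k}\neq 0$; since the rank triples $(v_{\cdot}, \deg_{v_{\cdot}}(\cdot), \ord_{\Delta}u_{\cdot})$ of $B_{i}$ and $A_{i}$ agree for $i<k$ and reducedness depends only on these triples, any $A_{j}$ witnessing non-reducedness of $B_{k}$ satisfies $j\geq k$; tracking leaders under $<_{\sigma}$ across the two subcases of Definition 3.4 then gives $\rk B_{k}\geq \rk A_{j}\geq \rk A_{k}$, contradicting $\rk B_{k}<\rk A_{k}$. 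In Case (2) the same triple matching shows that $B_{p+1}$, being reduced modulo $\{B_{1}, \dots , B_{p}\}$, is also reduced modulo $\mathcal{A}$, so the key lemma forces $B_{p+1}=0$, impossible in an autoreduced set.

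The main obstacle will be the key lemma. Its Buchberger-style bookkeeping is more intricate here than in the purely differential or purely difference case because two independent orderings $<_{\Delta}$ and $<_{\sigma}$ are in play, and the reducedness criterion (Definition 3.4) mixes conditions on the $\sigma$-leader $v_{A}$ and the $\Delta$-leader $u_{A}$. The coherence conditions supply exactly the $S$-polynomial-type reductions needed to resolve overlaps of $\sigma$-leaders, but at each rewriting step one must simultaneously verify that the accompanying $\Delta$-order inequalities of Definition 3.4 are preserved, so that a decrease of the $\sigma$-leader really corresponds to a legitimate reduction rather than a mere syntactic simplification. Securing termination of the iterated rewriting under these joint constraints is the technical core, and it is precisely the analogue of the classical difference-polynomial argument of \citep[Theorem 6.5.3]{KLMP99} that the authors invoke when they say the proof is obtained by mimicking that result.
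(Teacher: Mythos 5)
The paper does not actually prove this proposition; it defers to the analogous result for difference polynomials, \citep[Theorem 6.5.3]{KLMP99}, and your two-part plan --- forward direction via the reduction theorem (Proposition 3.7) and Proposition 3.11, converse via a Rosenfeld/Buchberger-style key lemma ("every element of $[\mathcal{A}]$ that is reduced with respect to $\mathcal{A}$ is zero") followed by the rank comparison in the two cases of Definition 3.8 --- is precisely that standard argument. Your case analysis in the converse is sound: whether $B$ is reduced with respect to $A$ depends only on $B$ and the triple $(v_{A},\deg_{v_{A}}A,\ord_{\Delta}u_{A})$, so equality of ranks transfers reducedness from the $B_{i}$ to the $A_{i}$, and the Remark after Definition 3.4 yields $\rk A_{j}\leq\rk B_{k}$ for any $A_{j}$ witnessing non-reducedness, which closes the contradiction in Case (1); Case (2) is handled correctly as well.

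Two points need attention. First, there is a genuine gap at the start of the forward direction: you write "because each $A_{i}$ is linear homogeneous," but that is part of what must be proved. Definition 3.12 only defines coherence for sets of linear homogeneous $\Delta$-$\sigma$-polynomials, so the assertion that a characteristic set of a linear ideal \emph{is} a coherent autoreduced set includes the claim that its elements are linear and homogeneous. This follows from the fact that a linear $\Delta$-$\sigma$-ideal is a homogeneous ideal of the polynomial ring $K[TY]$ (so homogeneous components of its elements lie in the ideal, and a nonzero homogeneous component of $C$ has rank not exceeding $\rk C$), but it requires an argument you have not supplied. Incidentally, you do not need $H\in K^{\ast}$ to get $B_{0}\in J$: since $\tau A_{i}\in J$ and $H\tau A_{i}-B_{0}\in[\mathcal{A}]\subseteq J$, the conclusion holds for any $H$. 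Second, the key lemma is the entire technical content of the converse, and you only outline it; your closing paragraph concedes that the termination and bookkeeping under the two orderings $<_{\Delta}$ and $<_{\sigma}$ are unverified. Since the paper supplies no details here either, this is not a deviation from its route, but the proof remains incomplete until that lemma is written out.
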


\section{Dimension polynomials. The main theorem}

Let $K$ be a $\Delta$-$\sigma$-field (as before $\Delta =
\{\delta_{1},\dots, \delta_{m}\}$ is a set of mutually commuting
derivations of $K$ and $\sigma$ is an endomorphism of $K$ that
commutes with every $\delta_{i}$). Let $R = K\{y_{1},\dots, y_{n}\}$
be the ring of $\Delta$-$\sigma$-polynomials over $K$ and $P$ a
prime $\Delta$-$\sigma$-ideal of $R$. Let $P^{\ast}$ denote the
reflexive closure of $P$ in $R$ (as we have mentioned, $P^{\ast}$ is
also a prime $\Delta$-$\sigma$-ideal of $R$) and for every $r, s\in
\mathbb{N}$, let $R_{rs} = K[\{\tau y_{i}\,|\,\tau\in T(r, s), 1\leq
i\leq n\}]$. In other words, $R_{rs}$ is a polynomial ring over $K$
in indeterminates $\tau y_{i}$ such that $\ord_{\Delta}\tau\leq r$
and $\ord_{\sigma}\tau\leq s$. Let $P_{rs} = P\bigcap R_{rs}$,
$P^{\ast}_{rs} = P^{\ast}\bigcap R_{rs}$, and let $L$, $L^{\ast}$,
$L_{rs}$ and $L^{\ast}_{rs}$ denote the quotient fields of the
integral domains $R/P$, $R/P^{\ast}$, $R_{rs}/P_{rs}$ and
$R_{rs}/P^{\ast}_{rs}$, respectively. If $\eta_{i}$ denotes the
canonical image of $y_{i}$ in $R_{rs}/P^{\ast}_{rs}$, then
$L^{\ast}$ is a $\Delta$-$\sigma$-field extension of $K$, $L^{\ast}
= K\langle \eta_{1},\dots,\eta_{n}\rangle$, and $L^{\ast}_{rs} =
K(\{\tau\eta_{i}\,|\,\tau\in T(r, s),\, 1\leq i\leq n\})$.

\medskip

The following statement is an analog of the theorem on the dimension
polynomial of an inversive difference-differential field extension
proved in \citep{Le13}.

\begin{theorem}
With the above notation, there exists a numerical polynomial
$\phi_{P^{\ast}}(t_{1}, t_{2})\in\mathbb{Q}[t_{1}, t_{2}]$ such that

{\em (i)} \,$\phi_{P^{\ast}}(r, s) = \trdeg_{K}L^{\ast}_{rs}$ for
all sufficiently large pairs $(r, s)\in\mathbb{N}^{2}$.

{\em (ii)} \, The polynomial $\phi_{P^{\ast}}(t_{1}, t_{2})$ is
linear with respect to $t_{2}$ and $\deg_{t_{1}}\phi_{P^{\ast}}\leq
m$, so this polynomial can be written as
$$\phi_{P^{\ast}}(t_{1}, t_{2}) =
\phi^{(1)}_{P^{\ast}}(t_{1})t_{2} + \phi^{(2)}_{P^{\ast}}(t_{1})$$
where $\phi^{(1)}_{P^{\ast}}(t_{1})$ and
$\phi^{(2)}_{P^{\ast}}(t_{1})$ are numerical polynomials in one
variable that, in turn, can be written as
$$\phi^{(1)}_{P^{\ast}}(t_{1}) =
\sum_{i=0}^{m}a_{i}{{t_{1}+i}\choose{i}}\,\,\,
\text{and}\,\,\,\phi^{(2)}_{P^{\ast}}(t_{1}) =
\sum_{i=0}^{m}b_{i}{{t_{1}+i}\choose{i}}$$ with $a_{i},\, b_{i}\in
\mathbb{Z}$ ($1\leq i\leq m$). Furthermore, $a_{m} =
\Delta$-$\sigma$-$\trdeg_{K}L^{\ast}$.
\end{theorem}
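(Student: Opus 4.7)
The strategy is to run the characteristic-set machinery of Section 3 on $P^{\ast}$ directly and reduce the computation of $\trdeg_{K}L^{\ast}_{rs}$ to a combinatorial count of ``free'' terms in $R_{rs}$, which is then handled by Theorem 2.4. First, by Proposition 3.7, select a characteristic set $\mathcal{A}=\{A_{1},\dots,A_{p}\}$ of $P^{\ast}$ with respect to the rank induced by $<_{\Delta}$ and $<_{\sigma}$. For each $j$, write its $\sigma$-leader as $v_{A_{j}}=\delta_{1}^{e_{j1}}\cdots\delta_{m}^{e_{jm}}\sigma^{f_{j}}y_{i_{j}}$ and attach the exponent $\varepsilon_{j}=(e_{j1},\dots,e_{jm},f_{j})\in\mathbb{N}^{m+1}$. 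For each $i\in\mathbb{N}_{n}$, set $E_{i}=\{\varepsilon_{j}\mid i_{j}=i\}$.

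The heart of the argument is to prove that, for all sufficiently large $(r,s)\in\mathbb{N}^{2}$, the canonical images in $L^{\ast}_{rs}$ of the terms $\tau y_{i}\in R_{rs}$ whose exponent vectors lie in $V_{E_{i}}(r,s)$ (i.e.\ are not $\geq_{P}$-above any $\varepsilon_{j}$ with $i_{j}=i$) form a transcendence basis of $L^{\ast}_{rs}$ over $K$. For the independence half, any polynomial in these unblocked terms is automatically reduced with respect to $\mathcal{A}$, so Proposition 3.11 forces it to be zero whenever it lies in $P^{\ast}$. For the spanning half, given a blocked $\tau y_{i}=\tau'v_{A_{j}}$, the transform $\tau'A_{j}\in P^{\ast}$ yields an algebraic dependence of $\tau y_{i}$ on terms of strictly lower $\sigma$-rank once multiplied by a suitable product $H$ of transforms of separants and leading coefficients of elements of $\mathcal{A}$, none of which lies in $P^{\ast}$ by Proposition 3.11; iterating and invoking Proposition 3.6 produces the desired reduction to unblocked terms. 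Taking $(r,s)$ large enough that every $\varepsilon_{j}$ lies in $T(r,s)$ keeps all intermediate transforms inside $R_{rs}$.

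Reflexivity of $P^{\ast}$ is the crucial ingredient: it guarantees $\sigma^{k}A_{j}\in P^{\ast}$ for every $k\in\mathbb{N}$, so the full family $\{\tau A_{j}:\tau\in T\}$ is available for reduction, and the blocked exponent set for each $y_{i}$ is exactly the product-order upset $\{a\in\mathbb{N}^{m+1}\mid a\geq_{P}\varepsilon_{j}\text{ for some }j\text{ with }i_{j}=i\}$. With this identification in place, Theorem 2.4 supplies numerical polynomials $\omega_{E_{i}}(t_{1},t_{2})$, linear in $t_{2}$ and of $t_{1}$-degree at most $m$, with $\omega_{E_{i}}(r,s)=\Card V_{E_{i}}(r,s)$ eventually. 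Setting $\phi_{P^{\ast}}(t_{1},t_{2})=\sum_{i=1}^{n}\omega_{E_{i}}(t_{1},t_{2})$ proves (i), and combining with Theorem 2.3 yields the canonical form in (ii). The identification $a_{m}=\Delta\text{-}\sigma\text{-}\trdeg_{K}L^{\ast}$ follows by comparing the $t_{1}^{m}t_{2}$-coefficient of $\phi_{P^{\ast}}$ with the asymptotic contribution $\mu\binom{t_{1}+m}{m}(t_{2}+1)$ produced by $\mu$ $\Delta$-$\sigma$-algebraically independent generators, observing that only those $y_{i}$ with $E_{i}=\emptyset$ contribute to the top $t_{1}^{m}t_{2}$-term.

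The principal obstacle is the transcendence-basis identification: the reduction conditions in Definition 3.4 entangle constraints on $\Delta$- and $\sigma$-orders, so one must verify carefully that these syntactic conditions translate into the clean product-order ``blocking'' structure encoded by the sets $E_{i}$. Reflexivity of $P^{\ast}$ is precisely what makes this translation work; in the non-reflexive case treated later in the paper, $\sigma$-torsion in $R/P$ breaks the symmetry and will require a more elaborate argument relating the characteristic sets of $P$ and $P^{\ast}$.
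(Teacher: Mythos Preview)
Your spanning argument has a genuine gap. You claim that the transcendence basis of $L^{\ast}_{rs}$ over $K$ consists exactly of the images of terms not divisible by any $\sigma$-leader $v_{A_{j}}$, i.e.\ those with exponent vector in $V_{E_{i}}(r,s)$. This set is too small. To make a blocked term $u=\tau' v_{A_{j}}$ algebraic over lower terms \emph{inside} $L^{\ast}_{rs}$ you need the relation $\tau' A_{j}\in P^{\ast}_{rs}$, hence $\tau' A_{j}\in R_{rs}$. But $A_{j}$ also contains its $\Delta$-leader $u_{A_{j}}$, and in general $\ord_{\Delta}u_{A_{j}}>\ord_{\Delta}v_{A_{j}}$, so $\ord_{\Delta}(\tau' u_{A_{j}})$ can exceed $r$ even though $\ord_{\Delta}u=\ord_{\Delta}(\tau' v_{A_{j}})\leq r$. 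For such $u$ the transform $\tau' A_{j}$ leaves $R_{rs}$, no usable relation exists in $P^{\ast}_{rs}$, and $u(\eta)$ remains transcendental over the smaller terms. The paper collects these extra free terms into
\[
U''_{rs}=\{u\in TY:\ord_{\Delta}u\leq r,\ \ord_{\sigma}u\leq s,\ u=\tau v_{A}\ \text{for some}\ A\in\mathcal{A},\ \ord_{\Delta}(\tau u_{A})>r\}
\]
and shows via inclusion--exclusion that $\Card U''_{rs}$ is also eventually a numerical polynomial of the right shape; the correct dimension polynomial is $\phi^{(1)}+\phi^{(2)}$ counting $U'_{rs}$ and $U''_{rs}$ separately. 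Your sum $\sum_{i}\omega_{E_{i}}$ captures only $\Card U'_{rs}$.

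Example~4.6 makes the failure explicit. With $B=\sigma y+\delta_{1}^{2}y+\delta_{2}^{2}y$ as characteristic set of $P^{\ast}$, the $\sigma$-leader is $\sigma y$ (of $\Delta$-order $0$) while the $\Delta$-leader is $\delta_{1}^{2}y$ (of $\Delta$-order $2$). Your set $V_{E_{1}}(r,s)$ consists of the terms with $\sigma$-exponent $0$, giving $\binom{r+2}{2}$ independently of $s$. But the terms $\sigma^{i}\delta_{1}^{j}\delta_{2}^{k}y$ with $1\leq i\leq s$ and $r-2<j+k\leq r$ are divisible by $\sigma y$ yet cannot be reduced inside $R_{rs}$, contributing an additional $(2r+1)s$ to $\trdeg_{K}L^{\ast}_{rs}$. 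The ``entanglement'' you flag in your final paragraph is not a detail to be checked but the whole point: the product-order blocking picture based on $\sigma$-leaders alone is simply false here, and the two-leader structure of Definition~3.4 is what produces the second summand $U''_{rs}$.
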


\begin{proof}
Let $\mathcal{A} = \{A_{1},\dots, A_{p}\}$ be a characteristic set
of the $\Delta$-$\sigma$-ideal $P^{\ast}$ and for any $r,
s\in\mathbb{N}$, let

\medskip

$U_{rs} = \{u\in TY\,|\,\ord_{\Delta}u\leq r,\, \ord_{\sigma}u\leq
s$  and either $u$ is not a multiple of any $v_{A_{i}}$ or $u$ is a
multiple of some $\sigma$-leader of an element of $\mathcal{A}$ and
for every $\tau\in T, A\in\mathcal{A}$ such that $u = \tau v_{A}$,
one has $\ord_{\Delta}(\tau u_{A}) > r\}$.

\medskip

Using our concept of an autoreduced and the arguments of the proof
of Theorem 6 in \citep[Ch. II]{Ko73}, we obtain that the set
$U_{rs}(\eta)  = \{u(\eta)\,|\,u\in U_{rs}\}$ is a transcendence
basis of $L^{\ast}_{rs}$ over $K$. In order to evaluate the number
of elements of $U_{rs}$ (and therefore, $\trdeg_{K}L^{\ast}_{rs}$),
let us consider the sets

\smallskip

$U'_{rs} = \{u\in TY\,|\,\ord_{\Delta}u\leq r,\, \ord_{\sigma}u\leq
s$  and $u$ is not a multiple of any $v_{A_{i}}\}$

\smallskip

\noindent and

\smallskip

$U''_{r, s} = \{u\in TY\,|\,\ord_{\Delta}u\leq r,\,
\ord_{\sigma}u\leq s$ and there exist $A\in\mathcal{A}$ such that $u
= \tau v_{A}$ and $\ord_{\Delta}(\tau u_{A}) > r\}$. Clearly,
$U'_{rs}\bigcap U''_{r, s} = \emptyset$ and $U_{r, s} = U'_{r,
s}\bigcup U''_{r, s}$.

\smallskip

By Theorem 2.4, there exists a numerical polynomial in two variables
$\phi^{(1)}(t_{1}, t_{2})$ such that $\phi^{(1)}(r, s) = \Card U'_{r
s}$ for all sufficiently large $(r, s)\in\mathbb{N}^{2}$,
$\deg_{t_{1}}\phi^{(1)}\leq m$, and $\deg_{t_{2}}\phi^{(1)}\leq 1$.
Furthermore, using the combinatorial principle of inclusion and
exclusion (repeating the arguments of the proof of Theorem 5.1 of
\citep{Le00} considered in the case of one translation), we obtain
that there exists a bivariate numerical polynomial
$\phi^{(2)}(t_{1}, t_{2})$ such that $\phi^{(2)}(r, s) = \Card
U''_{r s}$ for all sufficiently large $(r, s)\in\mathbb{N}^{2}$ and
$\phi^{(2)}(t_{1}, t_{2})$ is an alternating sum of bivariate
numerical polynomials of subsets of $\mathbb{N}^{m+1}$ described in
section 2. Each such a polynomial can be represented in the form
(2.1), so $\deg_{t_{1}}\phi^{(2)}\leq m$ and
$\deg_{t_{2}}\phi^{(2)}\leq 1$. Clearly the polynomial
$\phi_{P^{\ast}}(t_{1}, t_{2}) = \phi^{(1)}(t_{1}, t_{2}) +
\phi^{(2)}(t_{1}, t_{2})$ satisfies all conditions of the theorem.
The fact that $a_{m} = \Delta$-$\sigma$-$\trdeg_{K}L^{\ast}$ can be
established in the same way as in the last part of the proof of
Theorem 3.1 in \citep{Le13}.
\end{proof}

Note that in the case when $\sigma$ is an automorphism of $K$, the
statement of the last theorem was proved in \citep{Le00} with the
use of a theorem on the multivariate dimension polynomial of a
difference-differential module and properties of modules of K\"ahler
differentials.

The following theorem is the main result of the paper.

\begin{theorem}
With the notation introduced at the beginning of this section, there
exists a bivariate numerical polynomial $\psi_{P}(t_{1}, t_{2})$
such that

{\em (i)} \,$\psi_{P}(r, s) = \trdeg_{K}L_{rs}$ for all sufficiently
large pairs $(r, s)\in\mathbb{N}^{2}$.

{\em (ii)} \, The polynomial $\psi_{P}(t_{1}, t_{2})$ is linear with
respect to $t_{2}$ and $\deg_{t_{1}}\psi_{P}\leq m$, so it can be
written as
$$\psi_{P}(t_{1}, t_{2}) =
\psi^{(1)}_{P}(t_{1})t_{2} + \psi^{(2)}_{P}(t_{1})$$ where
$\psi^{(1)}_{P}(t_{1})$ and $\psi^{(2)}_{P}(t_{1})$ are numerical
polynomials in one variable.
\end{theorem}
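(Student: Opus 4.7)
The plan is to mirror the proof of Theorem 4.1 with $P$ in place of $P^\ast$, noting that the structural machinery of characteristic sets (Propositions 3.7, 3.9, and 3.10) does not require the ideal to be reflexive. By Proposition 3.9 applied to the nonempty subset $P\subseteq R$, fix a characteristic set $\mathcal{A}=\{A_{1},\dots,A_{p}\}$ of $P$. For each $(r,s)\in\mathbb{N}^{2}$, define $U_{rs}$ to be the set of terms $u\in TY$ with $\ord_{\Delta}u\le r$ and $\ord_{\sigma}u\le s$ such that either $u$ is not a multiple of any $v_{A_{i}}$, or every factorization $u=\tau v_{A}$ with $\tau\in T$, $A\in\mathcal{A}$ satisfies $\ord_{\Delta}(\tau u_{A})>r$. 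Letting $\eta_{i}$ denote the canonical image of $y_{i}$ in $R/P$, the key claim is that $U_{rs}(\eta)=\{u(\eta)\mid u\in U_{rs}\}$ is a transcendence basis of $L_{rs}$ over $K$.

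Algebraic independence of $U_{rs}(\eta)$ follows from Proposition 3.10: any polynomial dependence among its elements would lift to a nonzero element of $P$ reduced with respect to $\mathcal{A}$, which is forbidden. For the spanning direction, every term $u\in TY$ of orders at most $(r,s)$ that lies outside $U_{rs}$ admits a factorization $u=\tau v_{A}$ with $A\in\mathcal{A}$ and $\ord_{\Delta}(\tau u_{A})\le r$. Then $\tau A\in R_{rs}$, its $\Delta$-leader is $\tau u_{A}$, and its separant and leading coefficient are transforms of $S_{A}$ and $I_{A}$, none of which lies in $P$ by Proposition 3.10. This exhibits $u(\eta)$ as algebraic over the images of strictly lower ranked terms modulo $P$, and an induction on rank shows that every element of $R_{rs}/P_{rs}$ is algebraic over $U_{rs}(\eta)$.

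Once the transcendence basis claim is in hand, $\trdeg_{K}L_{rs}=\Card U_{rs}$, and I split $U_{rs}=U'_{rs}\sqcup U''_{rs}$ exactly as in the proof of Theorem 4.1, where $U'_{rs}$ consists of terms not divisible by any $v_{A_{i}}$ and $U''_{rs}$ of the remaining terms in $U_{rs}$. Theorem 2.4 yields a bivariate numerical polynomial with $\deg_{t_{1}}\le m$ and $\deg_{t_{2}}\le 1$ that computes $\Card U'_{rs}$ for all sufficiently large $(r,s)$. For $\Card U''_{rs}$, an inclusion–exclusion argument adapted from the proof of Theorem 5.1 of \citep{Le00} to the one-translation setting expresses the count as an alternating sum of bivariate dimension polynomials of explicit subsets of $\mathbb{N}^{m+1}$, each with the same degree bounds by Theorem 2.4. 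Their sum is a numerical polynomial $\psi_{P}(t_{1},t_{2})$ of bidegree at most $(m,1)$, and any such polynomial can be written as $\psi^{(1)}_{P}(t_{1})t_{2}+\psi^{(2)}_{P}(t_{1})$.

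The main obstacle I expect is the transcendence basis claim in the non-reflexive setting. In the proof of Theorem 4.1 one used in passing that $\sigma$-transforms behave predictably modulo the reflexive ideal $P^\ast$, so that reductions inside $R_{rs}$ cannot secretly run through elements killed by a power of $\sigma$; for a non-reflexive $P$ one must instead rely on the fact that the separants and leading coefficients of a characteristic set of $P$ still avoid $P$ (Proposition 3.10, proved without any reflexivity hypothesis), together with the order condition $\ord_{\Delta}(\tau u_{A})\le r$ built into the definition of $U_{rs}$, which keeps each $\tau A$ used in the reduction inside $R_{rs}$. Once this geometric input is secured, the combinatorial counting of $U'_{rs}$ and $U''_{rs}$ proceeds formally as in the reflexive case.
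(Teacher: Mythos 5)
There is a genuine gap at the step you yourself flag as the main obstacle, and the way you propose to close it does not work. You take a characteristic set $\mathcal{A}$ of $P$ itself and, in the spanning direction, reduce a term $u=\tau v_{A}$ using $\tau A$, asserting that the leading coefficient and separant of $\tau A$ --- which are \emph{transforms} $\tau' I_{A}$, $\tau' S_{A}$ --- do not lie in $P$ ``by Proposition 3.10 [3.11].'' But Proposition 3.11 only gives $I_{A}\notin P$ and $S_{A}\notin P$; it says nothing about $\sigma^{k}I_{A}$ or $\sigma^{k}S_{A}$. For a \emph{non-reflexive} prime ideal $P$ it is perfectly possible that $I_{A}\notin P$ while $\sigma^{k}I_{A}\in P$ for some $k>0$ (the transform $\sigma^{k}I_{A}$ need not be reduced with respect to $\mathcal{A}$, so Proposition 3.11 does not forbid this), and then the relation $\sigma^{k}A(\eta)=0$ degenerates and no longer exhibits $\sigma^{k}v_{A}(\eta)$ as algebraic over lower terms. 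This is exactly the point where the paper's proof of Theorem 4.1 invokes reflexivity: there one works with a characteristic set of $P^{\ast}$, so that $I\notin P^{\ast}$ together with reflexivity of $P^{\ast}$ forces $\sigma^{k}I\notin P^{\ast}\supseteq P$. Your substitute hypothesis ($I_{A}\notin P$) is strictly weaker and does not deliver the conclusion you need; as a result the claim that $U_{rs}(\eta)$ is a transcendence basis of $L_{rs}$, and hence the identity $\trdeg_{K}L_{rs}=\Card U_{rs}$, is unsupported.

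The paper's actual route is built precisely to avoid this trap. It fixes a characteristic set of the reflexive closure $P^{\ast}$ (whose initials and separants have \emph{all} transforms outside $P$), pays for the fact that the $A_{j}$ themselves need not lie in $P$ --- only $\sigma^{s_{j}}A_{j}\in P$ --- by introducing the finite correction set $W=\{\sigma^{k}v_{j}\mid 0\le k\le s_{j}-1\}$ and the integer $q=\Card$ of a maximal subset of $W(\eta)$ algebraically independent over $K(V(\eta))$, and obtains $\trdeg_{K}L_{rs}=\Card V_{rs}+\lambda(r)$ rather than the cardinality of a single combinatorially defined set of terms. The correction $\lambda(r)$ is not computed by Theorem 2.4 at all; its eventual polynomiality is deduced by comparing with Kolchin's univariate differential dimension polynomial for fixed $s$. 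None of this structure appears in your proposal, and without it (or a genuine proof that transforms of the initials and separants of a characteristic set of $P$ avoid $P$) the argument does not go through. Note that in the linear case $I_{A},S_{A}\in K$, so your approach happens to work there; the difficulty is invisible in linear examples such as Example 4.6 but is the crux of the general theorem.
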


\begin{proof}
We start with the proof for the case $\Delta = \emptyset$. In this
case we will use the above notation and conventions just replacing
the prefix $\Delta$-$\sigma$- by $\sigma$- (and
''difference-differential'' by ''difference''). Let $\mathcal{A} =
\{A_{1},\dots, A_{p}\}$ be a characteristic set of the
$\sigma$-ideal $P^{\ast}$ (the reflexive closure of the prime
$\sigma$-ideal $P$ of the ring of $\sigma$-polynomials $R =
K\{y_{1},\dots, y_{n}\}$) and let $v_{j}$ denote the $\sigma$-leader
of $A_{j}$ ($j=1,\dots, p$). Let $\eta_{i} = y_{i}+P$ ($1\leq i\leq
n$), $L = K(\{\sigma^{k}\eta_{i}\,|\,k\in\mathbb{N},\, 1\leq i\leq
n\})$ (the quotient field of $R/P$) and $L_{s} =
K(\{\sigma^{k}\eta_{i}\,|\,0\leq k\leq s,\, 1\leq i\leq n\})$.

For every $j=1,\dots, p$, let $s_{j}$ be the smallest nonnegative
integer such that $\sigma^{s_{j}}(A_{j})\in P$. Furthermore, let
$$V = \{u\in TY\,|\,u\neq\sigma^{i}v_{j}\,\,\, \text{for any}\,\,\,
i\in\mathbb{N},\, 1\leq j\leq p\},$$ $V_{r} = \{v\in
V\,|\,\ord_{\sigma}v\leq r\}$ ($r\in\mathbb{N}$), $V(\eta) =
\{u(\eta)\,|\,u\in V\},$ $W = \{\sigma^{k}v_{j}\,|\,1\leq j\leq p,
\,0\leq k\leq s_{j}-1\}$,\, and\,  $W(\eta) = \{u(\eta)\,|\,u\in
W\}.$

\medskip

It is easy to see that the set $V(\eta)$ is algebraically
independent over $K$. Indeed, suppose there exist $v_{1},\dots,
v_{l}\in V$ and a polynomial $f(X_{1},\dots, X_{l})$ in $l$
variables with coefficients in the field $K$ such that
$f(v_{1}(\eta),\dots, v_{l}(\eta)) = 0$. Then $f(v_{1},\dots,
v_{l})\in P\subseteq P^{\ast}$ and $f(v_{1},\dots, v_{l})$ is
reduced with respect to the characteristic set $\mathcal{A}$ (this
$\sigma$-polynomial does not contain any transforms of the leaders
of elements of $\mathcal{A}$). Therefore, $f = 0$, so the set
$V(\eta)$ is algebraically independent over $K$.

\smallskip

Now we notice that every element of the field $L$ is algebraic over
its subfield $K\left(V(\eta)\bigcup W(\eta)\right)$.

Indeed, since $L = K(V(\eta)\bigcup
W(\eta)\bigcup\{\sigma^{k}v_{j}(\eta)\,|\,1\leq j\leq p, k\geq
s_{i}\})$, it is sufficient to prove that every element
$\sigma^{k}v_{j}(\eta)$ with $k\geq s_{j}$ ($1\leq i\leq p$) is
algebraic over $K\left(V(\eta)\bigcup W(\eta)\right)$.

Since $\sigma^{s_{j}}A_{j}\in P$, we have $\sigma^{s_{j}}A_{j}(\eta)
= 0$, hence $\sigma^{k}A_{j}(\eta) = 0$ for all $k\geq s_{j}$. If
one writes $A_{j}$ as a polynomial in $v_{j}$,
$$A_{j} = I^{(j)}_{q_{j}}v_{j}^{q_{j}} +
I^{(j)}_{q_{j}-1}v_{j}^{q_{j}-1}+\dots + I^{(j)}_{0}$$
($I^{(j)}_{q_{j}},\, \dots, I^{(j)}_{0}$ do not contain $v_{j}$) and
$k\geq s_{j}$, then

\medskip

$\sigma^{k}A_{j}(\eta) =
\left(\sigma^{k}I^{(j)}_{q_{j}}(\eta)\right)v_{j}(\eta)^{q_{j}} +
\left(\sigma^{k}I^{(j)}_{q_{j}-1}(\eta)\right)v_{j}(\eta)^{q_{j}-1}
+\dots + \sigma^{k}I^{(j)}_{0}(\eta) = 0.$

\medskip

\noindent Note that $I^{(j)}_{q_{j}}\notin P^{\ast}$, since
$I^{(j)}_{q_{j}}$ is an initial of an element of the characteristic
set of $P^{\ast}$ and therefore is reduced with respect to this set
(by Proposition 3.11, the inclusion $I^{(j)}_{q_{j}}\in P^{\ast}$
would imply $I^{(j)}_{q_{j}}=0$). Since the $\sigma$-ideal
$P^{\ast}$ is reflexive, $\sigma^{k}I^{(j)}_{q_{j}}\notin P^{\ast}$,
hence $\sigma^{k}I^{(j)}_{q_{j}}(\eta)\neq 0$. It follows that
$\sigma^{k}v_{j}(\eta)$ is algebraic over the field
$K\left(V(\eta)\bigcup W(\eta)\bigcup\{u(\eta)\,|\,u <_{\sigma}
\sigma^{k}u_{i}\}\right)$ (the term ordering $<_{\sigma}$ was
defined at the beginning of section 3).  By induction on the
well-ordered (with respect to $<_{\sigma}$) set of terms $TY$ we
obtain that all elements $\sigma^{k}v_{j}(\eta)$ ($1\leq j\leq p$,
$k\in \mathbb{N}$) are algebraic over $K\left(V(\eta)\bigcup
W(\eta)\right)$, so $L$ is algebraic over this field as well.

\smallskip

Let $\{w_{1},\dots, w_{q}\}$ be a maximal subset of $W$ such that
the set $\{w_{1}(\eta),\dots, w_{q}(\eta)\}$ is algebraically
independent over $K\left(V(\eta)\right)$. Then
$V(\eta)\bigcup\{w_{1}(\eta),\dots, w_{q}(\eta)\}$ is a
transcendence basis of the field $L$ over $K$. Furthermore, since
the set $W(\eta)$ is finite, there exists $r_{0}\in\mathbb{N}$ such
that

\noindent(i) $w_{1},\dots, w_{q}\in R_{r_{0}}$;

\noindent(ii) $r_{0}\geq \max\{\ord_{\sigma}v_{j}+s_{j}\,|\,1\leq
j\leq p\}$;

\noindent(iii) Every element of $W(\eta)$ is algebraic over the
field $K\left(V_{r_{0}}(\eta)\bigcup\{w_{1}(\eta),\dots,
w_{q}(\eta)\}\right)$.

\medskip

Let $r\geq r_{0}$, $R_{r} = K[\{\sigma^{k}y_{i}\,|\,1\leq i\leq n,\,
0\leq k\leq r\}]$, and $P_{r}= P\bigcap R_{r}$. Let $L_{r}$  denote
the quotient field of the integral domain $R_{r}/P_{r}$ and
$\zeta^{(r)}_{i} = y_{i}+P_{r}\in R_{r}/P_{r}\subseteq L_{r}$
($1\leq i\leq n$). Furthermore, let $\zeta^{(r)} =
\{\zeta^{(1)}_{1},\dots, \zeta^{(1)}_{n}\}$,  and
$V_{r}(\zeta^{(r)}) = \{v(\zeta^{(r)})\,|\,v\in V_{r}\}$. We are
going to prove that $$B_{r} =
V_{r}(\zeta^{(r)})\bigcup\{w_{1}(\zeta^{(r)}),\dots,
w_{q}(\zeta^{(r)})\}$$ is a transcendence basis of $L_{r}$ over $K$.

Repeating the arguments of the proof of Theorem 4.1 (applied to the
set $V_{r}(\zeta^{(r)})$ instead of $V(\eta)$) we obtain that
$V_{r}(\zeta^{(r)})$  is algebraically independent over $K$.  Let us
show that  the elements $w_{1}(\zeta^{(r)}),\dots,
w_{q}(\zeta^{(r)})$ are algebraically independent over the field
$K(V_{r}(\zeta^{(r)}))$. Suppose that $g(w_{1}(\zeta^{(r)}),\dots,
w_{q}(\zeta^{(r)})) = 0$ for some polynomial $g$ in $q$
indeterminates with coefficients in
$K\left(V_{r}(\zeta^{(r)})\right)$. Then there exist elements
$z_{1},\dots, z_{d}\in V_{r}$ such that all coefficients of $f$ lie
in $K(z_{1}(\zeta^{(r)}),\dots, z_{d}(\zeta^{(r)})\,)$. Multiplying
$f$ by the common denominator of these coefficients we obtain a
nonzero polynomial $g$ in $d+q$ indeterminates such that
$$g(z_{1}(\zeta^{(r)}),\dots, z_{d}(\zeta^{(r)}),
w_{1}(\zeta^{(r)}),\dots, w_{q}(\zeta^{(r)})) = 0,$$ hence
$g(z_{1},\dots, z_{d}, w_{1},\dots, w_{q})\in P_{r}\subseteq P$.
Considering the image of $g$ under the natural homomorphism
$R\rightarrow R/P\subseteq L$ we obtain that $g(z_{1}(\eta),\dots,
z_{d}(\eta), w_{1}(\eta),\dots, w_{q}(\eta)) = 0$ where
$z_{i}(\eta)\in K\left(V(\eta)\right)$, $1\leq i\leq d$. Since
$V(\eta)\bigcup\{w_{1}(\eta),\dots, w_{q}(\eta)\}$ is a
transcendence basis of $L/K$, $g = 0$, a contradiction. Therefore,
the elements $w_{1}(\zeta^{(r)}),\dots, w_{q}(\zeta^{(r)})$ are
algebraically independent over $K\left(V_{r}(\zeta^{(r)})\right)$,
so that the set $B_{r}$ is algebraically independent over $K$.

\medskip

Now let $r\geq r_{0}$ and let $u = \sigma^{l}y_{i}$ where $0\leq
l\leq r$ ($1\leq i\leq n$). If $u$ is not a transform of any $v_{k}$
($1\leq k\leq p$), then $u\in V_{r}$ and $u(\zeta)\in V_{r}(\zeta)$.

If $u = \sigma^{j}v_{k}$ where $0\leq j\leq s_{k}-1$ (in this case
$\ord_{\sigma}u < r$), then $u\in W$, hence the element $u(\eta)$ is
algebraic over the field
$K\left(V_{r}(\eta)\bigcup\{w_{1}(\eta),\dots,
w_{q}(\eta)\}\right)$. As above, we obtain the existence of a
nonzero polynomial $h$ in $d+q+1$ variables ($d\in \mathbb{N}$) and
elements $z_{1},\dots, z_{d}\in V_{r}$ such that $h(z_{1},\dots,
z_{d}, w_{1},\dots, w_{q}, u)\in P_{r}$. Then
$h(z_{1}(\zeta^{(r)}),\dots, z_{d}(\zeta^{(r)}),
w_{1}(\zeta^{(r)}),\dots, w_{q}(\zeta^{(r)}), u(\zeta^{(r)})\,) = 0$
hence $u(\zeta^{(r)})$ is algebraic over the field $K(B_{r})$.

Suppose that $u = \sigma^{j}v_{k}$ where $s_{k}\leq j\leq r-\ord
v_{k}$ ($1\leq k\leq p$). Then $\sigma^{j}A_{k}\in P_{r}$, hence
$\sigma^{j}A_{k}(\zeta^{(r)}) = 0$. If one writes $A_{k}$ as a
polynomial of $v_{k}$,
$$A_{k} = I_{kd_{k}}v_{k}^{d_{k}} +\dots + I_{k1}v_{k} + I_{k0}$$
($I_{ij}$ do not contain $v_{k}$ and all terms in $I_{ij}$ are lower
than $v_{k}$ with respect to $<_{\sigma}$), then
$\sigma^{j}I_{kd_{k}}\notin P^{\ast}$, since $I_{kd_{k}}$ is the
initial of an element of a characteristic set of $P^{\ast}$ and the
ideal $P^{\ast}$  is reflexive. Therefore,
$\sigma^{j}I_{kd_{k}}(\zeta^{(r)})\neq 0$, so the equality
$\sigma^{j}A_{k}(\zeta^{(r)}) = 0$ shows that $u(\zeta^{(r)}) =
\sigma^{j}v_{k}(\zeta^{(r)})$ is algebraic over the field
$K\left(V_{r}(\zeta^{(r)})\bigcup\{v(\zeta^{(r)})\,|\,v\in TY,\, v
<_{\sigma} u\}\right)$.

Using the induction on the well-ordered (with respect to
$<_{\sigma}$) set $TY$ we obtain that $u(\zeta^{(r)})$ is algebraic
over the field
$K\left(V_{r}(\zeta^{(r)})\bigcup\{\sigma^{j}v_{k}(\zeta^{(r)})\,|\,
1\leq k\leq d, \, 0\leq j\leq s_{k}-1\}\right)$ and this field, as
we have seen, is algebraic over $K(B_{r})$. It follows that
$u(\zeta^{(r)})$ is algebraic over $K(B_{r})$ for every term $u$
with $\ord_{\sigma}u\leq r$. Therefore, $B_{r}$ is a transcendence
basis of $L_{r}$ over $K$.

As we have seen, $\Card B_{r} = \Card V_{r} + q$. By \citep[Ch. 0,
Lemma 16]{Ko73}, there exists a numerical polynomial $\omega(t)$ in
one variable $t$ such that $\omega(r) = \Card V_{r}$ for all
sufficiently large $r\in\mathbb{Z}$. Therefore, the numerical
polynomial $\psi_{P}(t) = \omega(t) + q$ satisfies the desired
conditions: $\psi_{P}(r) = \trdeg_{K}L_{r}$ for all sufficiently
large $r\in\mathbb{Z}$.

\smallskip

Now we are going to complete the proof of the theorem considering
the case when $\Card\Delta = m > 0$. In this case, the field
$L_{rs}$ can be treated as the subfield
$K(\{\theta\sigma^{j}\xi_{i}\,|\,\theta\in\Theta(r), 0\leq j\leq s,
1\leq i\leq n\})$ of the differential ($\Delta$-) overfield
$K\langle\{\sigma^{j}\xi_{i}\,|\,0\leq j\leq s, 1\leq i\leq
n\}\rangle_{\Delta}$ of $K$. ($\xi_{i}$ is the canonical image of
$y_{i}$ in $R_{rs}/P_{rs}$; the index $\Delta$ indicates that we
consider a differential, not a difference-differential, field
extension.)

By the Kolchin's theorem on differential dimension polynomial (see
\citep[Ch.II, Theorem 6]{Ko73}\,), for any $s\in\mathbb{N}$, there
exists a numerical polynomial $\chi_{s}(t) =
\sum_{i=0}^{m}a_{i}(s)\D{t+i\choose i}$ in one variable $t$ such
that $\chi_{s}(r) = \trdeg_{K}L_{rs}$ for all sufficiently large
$r\in \mathbb{N}$ and $a_{i}(s)\in\mathbb{Z}$ ($0\leq i\leq m$).

On the other hand, the first part of the proof (with the use of the
finite set of $\sigma$-indeterminates
$\{\theta(r)y_{i}\,|\,\theta\in\Theta(r),\, 1\leq i\leq n\}$ instead
of $\{y_{1},\dots, y_{n}\}$) shows that $\trdeg_{K}L_{rs} = \Card
V_{rs} + \lambda(r)$ where

\medskip

$V_{rs} = \{u=\tau y_{i}\in TY\,|\,\tau\in T(r, s)$ and $u\neq \tau'
v_{j}$ for any $\tau'\in T,\, 1\leq j\leq p\}$.

\medskip

\noindent($v_{j}$ denotes the $\sigma$-leader of the element $A_{j}$
of a characteristic set $\mathcal{A} = \{A_{1},\dots, A_{p}\}$ of
the reflexive closure $P^{\ast}$ of $P$.) Since the set $W$ in the
first part of the proof is finite and depends only on the
$\sigma$-orders of terms of $A_{j}$, $1\leq j\leq p$, the number of
elements of the corresponding set in the general case depends only
on $r$; we have denoted it by $\lambda(r)$. (More precisely, let
$R^{(r)} = K\langle\{\theta y_{i}\,|\,\theta\in\Theta(r),\, 1\leq
i\leq n\}\rangle_{\sigma}$ be the ring of difference ($\sigma$-)
polynomials in difference indeterminates $\theta y_{i}$
($\theta\in\Theta(r),\, 1\leq i\leq n$) over $K$, $\mathcal{A}(r) =
\{A_{1}^{(r)},\dots, A_{p(r)}^{(r)}\}$  a characteristic set of the
difference ideal $P^{\ast}\bigcap R^{(r)}$ of $R^{(r)}$ and
$v_{j}(r)$ the leader of $A_{j}^{(r)}$ (treated as a difference
polynomial). If $s_{j}(r)$ is the smallest nonnegative integer such
that $\sigma^{s_{j}(r)}(A_{j}^{(r)})\in P\bigcap R^{(r)}$, then the
set $W(r)$ that correspond to the set $W$ in the first part of the
proof is of the form $W(r) = \{\sigma^{k}v_{j}(r)\,|\,1\leq j\leq
p(r), 0\leq k\leq s_{j}(r)\}$.)

By Theorem 2.4, there exist $r_{0}, s_{0}\in\mathbb{N}$ and  a
bivariate numerical polynomial $\omega(t_{1}, t_{2})$ such that
$\omega(r, s) = \Card V_{rs}$ for all $r\geq r_{0},\, s\geq s_{0}$,
$\deg_{t_{1}}\omega\leq m$ and $\deg_{t_{2}}\omega\leq 1$. (This
polynomial can be computed with the use of formula (2.2).)
Therefore, $\trdeg_{K}L_{rs} = \omega(r, s) + \lambda(r)$ for all
$r\geq r_{0},\, s\geq s_{0}$. At the same time, we have seen that
$\trdeg_{K}L_{rs_{0}} = \chi_{s_{0}}(r) =
\sum_{i=0}^{m}a_{i}(s_{0})\D{r+i\choose i}$ for all sufficiently
large $r\in \mathbb{N}$ ($a_{i}(s_{0})\in\mathbb{Z}$). It follows
that $\lambda(r)$ is a polynomial of $r$ for all sufficiently large
$r\in\mathbb{N}$, say, for all $r\geq r_{1}$. Therefore, for any
$s\geq s_{0},\, r\geq\max\{r_{0}, r_{1}\}$, $\trdeg_{K}L_{rs} =
\omega(r, s) + \lambda(r)$ is expressed as a bivariate numerical
polynomial in $r$ and $s$.
\end{proof}

\begin{definition}
The numerical polynomial $\psi_{P}(t_{1}, t_{2})$ whose existence is
established by Theorem 4.2 is called the $\Delta$-$\sigma$-dimension
polynomial of the $\Delta$-$\sigma$-ideal $P$.
\end{definition}

The proof of the last theorem (as well as the proof of Theorem 4.1)
shows that the main step in the computation of a
$\Delta$-$\sigma$-dimension polynomial is the construction of a
characteristic set in the sense of section 3. It can be realized by
the corresponding generalization of the Ritt-Kolchin algorithm
described in \citep[Section 5.5]{KLMP99}, but the development and
implementation of such a generalization is the subject of future
research.

\medskip

\noindent{\bf Remark.} The result of Theorem 4.2 raises the question
whether a bivariate dimension polynomial of similar kind can be
assigned to a non-reflexive difference-differential polynomial ideal
in the case of several translations, that is, when the endomorphism
$\sigma$ in the conditions of Theorem 4.2 is replaced with a finite
set of endomorphisms $\Sigma = \{\sigma_{1},\dots, \sigma_{h}\}$
whose elements commute between themselves and commute with basic
derivations. (In this case,
$\ord\delta_{1}^{k_{1}}\dots\delta_{m}^{k_{m}}\sigma_{1}^{l_{1}}\dots\sigma_{h}^{l_{h}}
= \sum_{i=1}^{m}k_{i} + \sum_{j=1}^{h}l_{j}$).) Unfortunately, our
proof of Theorem 4.2 does not work in this case. Indeed, if there
are several translations and $\tau_{1},\dots, \tau_{p}$ are power
products of elements of $\Sigma$ such that $\tau_{i}(A_{i})\in P$
(we use the notation of the first part of the proof of Theorem 4.2;
$\tau_{i}$ are analogs of $\sigma^{s_{i}}$ in the proof), then the
set $W$ of all terms that are multiples of some $v_{i}$ but not
multiples of any $\tau_{i}(v_{i})$ is, generally speaking, infinite.
Thus, the arguments of our proof cannot be applied. However, the
author conjectures that the analog of Theorem 4.2 holds for
difference-differential polynomial rings with several translations.

\begin{corollary}
With the notation of Theorem 4.2, let $P$ be a prime
$\Delta$-$\sigma$-ideal of the ring of $\Delta$-$\sigma$-polynomials
$R = K\{y_{1},\dots, y_{n}\}$ and let $P^{\ast}$ be the reflexive
closure of $P$ in $R$. Then there exists  $k\in\mathbb{N}$ such that
$P^{\ast} = \sigma^{-k}(P)$.
\end{corollary}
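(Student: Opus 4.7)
The plan is to extract an explicit bound $k$ from a characteristic set of $P^{\ast}$ and then use the reduction theorem together with the primality of $\sigma^{-k}(P)$. First I would fix a characteristic set $\mathcal{A} = \{A_{1}, \ldots, A_{p}\}$ of $P^{\ast}$; by Proposition 3.6 this set is finite. Since each $A_{j}$ lies in $P^{\ast}$, by definition of the reflexive closure there is a smallest $s_{j}\in\mathbb{N}$ with $\sigma^{s_{j}}(A_{j})\in P$. Setting $k := \max_{1\leq j\leq p} s_{j}$ and using that $P$ is closed under $\sigma$, we obtain $\sigma^{k}(A_{j}) = \sigma^{k-s_{j}}(\sigma^{s_{j}}(A_{j}))\in P$ for every $j$, so $\mathcal{A}\subseteq \sigma^{-k}(P)$ and hence $[\mathcal{A}]\subseteq \sigma^{-k}(P)$.

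Next I would establish the opposite inclusion $P^{\ast}\subseteq \sigma^{-k}(P)$. Given $B\in P^{\ast}$, Proposition 3.7 produces a $\Delta$-$\sigma$-polynomial $B_{0}$ reduced with respect to $\mathcal{A}$ and a finite product $H$ of transforms (powers of $\sigma$, in the paper's convention following Definition 2.7) of separants and leading coefficients of elements of $\mathcal{A}$ such that $HB - B_{0}\in [\mathcal{A}]\subseteq P^{\ast}$. Thus $B_{0}\in P^{\ast}$, and Proposition 3.11 forces $B_{0} = 0$; consequently $HB\in [\mathcal{A}]\subseteq \sigma^{-k}(P)$. Since $\sigma^{-k}(P)$ is prime (as the preimage of the prime ideal $P$ under the ring endomorphism $\sigma^{k}$ of $R$), it suffices to verify $H\notin \sigma^{-k}(P)$. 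Each factor of $H$ has the form $\sigma^{l}(S_{A_{i}})$ or $\sigma^{l}(I_{A_{i}})$; by Proposition 3.11, $S_{A_{i}}$ and $I_{A_{i}}$ do not lie in $P^{\ast}$, and by the reflexivity of $P^{\ast}$ neither do their $\sigma$-transforms. Primality of $P^{\ast}$ then gives $H\notin P^{\ast}$, and since $\sigma^{-k}(P)\subseteq P^{\ast}$ we obtain $H\notin \sigma^{-k}(P)$. Primality of $\sigma^{-k}(P)$ combined with $HB\in \sigma^{-k}(P)$ now forces $B\in \sigma^{-k}(P)$, as desired; the reverse inclusion $\sigma^{-k}(P)\subseteq P^{\ast}$ is immediate from the definition of the reflexive closure.

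The delicate point is control of the multiplier $H$: the argument succeeds precisely because $H$ is a product of $\sigma$-transforms (rather than arbitrary elements of $T$) of separants and leading coefficients, so that the reflexivity of $P^{\ast}$ can be invoked to exclude $H$ from $P^{\ast}$. This is built into the paper's convention that ``transform'' means a power of $\sigma$ and into the structure of the reduction algorithm behind Proposition 3.7. Everything else is a routine application of the reduction theorem and elementary facts about primes; in particular, this is the same obstruction the author flags in the Remark following Theorem 4.2 as the reason the argument does not extend directly to several commuting translations.
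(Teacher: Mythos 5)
Your proof is correct, but it is not the route the paper's own proof of Corollary~4.4 takes; it coincides almost verbatim with the \emph{alternative} proof that the author records in the Remark immediately after the corollary (attributed to an anonymous reviewer). The paper's primary proof instead observes that $\mathcal{A}$ is also a characteristic set of the prime $\Delta$-$\sigma$-ideal $\sigma^{-k}(P)$, whose reflexive closure is again $P^{\ast}$, and then reruns the first part of the proof of Theorem~4.2 with all $s_{j}=0$ to conclude that $\sigma^{-k}(P)$ and $P^{\ast}$ have the same dimension polynomial; since $\sigma^{-k}(P)\cap R_{rs}\subseteq P^{\ast}\cap R_{rs}$ are prime ideals of the Noetherian ring $R_{rs}$ with equal transcendence degrees for all large $(r,s)$, the two ideals agree on each $R_{rs}$ and hence coincide. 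Your argument --- reduce $B\in P^{\ast}$ to $0$ modulo $\mathcal{A}$ via Proposition~3.7, note $HB\in[\mathcal{A}]\subseteq\sigma^{-k}(P)$, and exclude the multiplier $H$ from the prime ideal $\sigma^{-k}(P)$ using Proposition~3.11 together with the reflexivity and primality of $P^{\ast}$ --- is more elementary (it bypasses Theorem~4.2 entirely) and, as the paper notes, has the advantage of generalizing to several commuting translations. On that last point your closing paragraph slightly misreads the paper: the obstruction flagged in the Remark following Theorem~4.2 (the set $W$ of terms between the $v_{i}$ and the $\tau_{i}(v_{i})$ becoming infinite) is an obstruction to the dimension-polynomial machinery, not to this reduction argument; the paper explicitly states that the reduction-based proof of the corollary \emph{does} extend to several translations, with $\sigma^{l}$ replaced by elements of the semigroup $\Gamma$ generated by $\Sigma$, against which reflexivity still applies. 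This is a side comment and does not affect the validity of your proof.
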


\begin{proof}
Let $\mathcal{A}$ be a characteristic set of $P^{\ast}$. Since the
set $\mathcal{A}$ is finite, there exists $k\in\mathbb{N}$ such that
$\sigma^{k}(\mathcal{A})\subseteq P$. Clearly, $\sigma^{-k}(P)$ is a
prime $\Delta$-$\sigma$-ideal of $R$, its reflexive closure is
$P^{\ast}$, and $\mathcal{A}$ is a characteristic set of
$\sigma^{-k}(P)$. Applying the first part of the proof of Theorem
4.2 to $\sigma^{-k}(P)$ and its reflexive closure $P^{\ast}$ (in
this case all $s_{j}$ in the proof are zeros), we obtain that the
$\Delta$-$\sigma$-dimension polynomials of the ideals
$\sigma^{-k}(P)$ and $P^{\ast}$ are equal. Therefore,
$\sigma^{-k}(P)\bigcap R_{rs} = P^{\ast}\bigcap R_{rs}$ for all
sufficiently large $(r, s)\in\mathbb{N}^{2}$ (as before, $R_{rs} =
K[\{\tau y_{i}\,|\,\tau\in T(r, s), 1\leq i\leq n\}]$), hence
$P^{\ast} = \sigma^{-k}(P)$.
\end{proof}

\noindent{\bf Remark.} I would like to thank the anonymous reviewer
of the paper for the following alternative proof of the last
statement.

As above, let $\mathcal{A}$ be a characteristic set of $P^{\ast}$
and let $k$ be a positive integer such that
$\sigma^{k}(\mathcal{A})\subseteq P$. By Propositions 3.7 and 3.11,
if $f\in P^{\ast}$, then there exists $H\in R$ such that $H$ is a
finite product of transforms of separants and leading coefficients
of elements of $\mathcal{A}$ and $Hf\in[\mathcal{A}]$. Then
$\sigma^{k}(Hf) =
\sigma^{k}(H)\sigma^{k}(f)\in\sigma^{k}([\mathcal{A}])\subseteq P$.
Since the separants and leading coefficients of elements of
$\mathcal{A}$ are reduced with respect to $\mathcal{A}$ and the
ideal $P^{\ast}$ is prime, $H\notin P^{\ast}$ (see Proposition
3.11). Therefore, $\sigma^{k}(H)\notin P$, hence $\sigma^{k}(f)\in
P$, so $f\in\sigma^{-k}(P)$. Thus, $P^{\ast} = \sigma^{-k}(P)$.

\smallskip

This proof has the advantage that it can be extended to the proof of
the corresponding statement for the case of several translations.
Indeed, let $\Sigma = \{\sigma_{1},\dots, \sigma_{h}\}$ be the basic
set of translations of a difference-differential
($\Delta$-$\Sigma$-) field $K$, $\Gamma$ the free commutative
semigroup of all power products $\sigma_{1}^{l_{1}}\dots
\sigma_{h}^{l_{h}}$ ($l_{j}\in\mathbb{N}$) and $R = K\{y_{1},\dots,
y_{n}\}$ the ring of $\Delta$-$\Sigma$-polynomials over $K$ (as a
ring, $R$ is a polynomial ring in the set of indeterminates $\lambda
y_{i}$ ($1\leq i\leq n$) where $\lambda$ is a power product of
elements of $\Delta\bigcup\Sigma$ with nonnegative integer
exponents, see \citep[Sect. 3.5]{KLMP99} for details). If $P$ is a
prime $\Delta$-$\Sigma$-ideal of $R$ and $P^{\ast}$ the reflexive
closure of $P$ (that is, the set of all
$\Delta$-$\Sigma$-polynomials $f\in R$ such that $\gamma_{f}(f)\in
P$ for some $\gamma_{f}\in\Gamma$), then the arguments of the last
proof show that $P^{\ast} = \gamma^{-1}(P)$ for some
$\gamma\in\Gamma$.

\begin{corollary}
With the above notation, let $K$ be a $\Delta$-$\sigma$-filed and $R
= K\{y_{1},\dots, y_{n}\}$ the ring of $\Delta$-$\sigma$-polynomials
over $K$. Then the ring $R$ satisfies the ascending chain condition
for prime $\Delta$-$\sigma$-ideals. In particular, the ring of
difference polynomials over an ordinary difference field satisfies
the ascending chain condition for prime difference ideals.
\end{corollary}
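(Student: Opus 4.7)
The plan is to combine Cohn's ACC for perfect $\Delta$-$\sigma$-ideals (\citep{Cohn70}) with Corollary 4.4 and the dimension polynomial supplied by Theorem 4.2 to force any ascending chain of prime $\Delta$-$\sigma$-ideals to stabilize.

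I would start with an arbitrary chain $P_1 \subseteq P_2 \subseteq \cdots$ of prime $\Delta$-$\sigma$-ideals and first pass to the reflexive closures. Each $P_i^\ast$ is a prime reflexive $\Delta$-$\sigma$-ideal, hence perfect: if a product of transforms $\prod_j \tau_j f$ lies in $P_i^\ast$, primeness yields a single $\tau_j f \in P_i^\ast$, and reflexivity then brings $f$ itself into $P_i^\ast$. So $P_1^\ast \subseteq P_2^\ast \subseteq \cdots$ is a chain of perfect $\Delta$-$\sigma$-ideals, and by Cohn's theorem it stabilizes at some $Q = P_N^\ast = P_{N+1}^\ast = \cdots$. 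For each $i\geq N$, Corollary 4.4 gives a minimal $k_i \in \mathbb{N}$ with $\sigma^{k_i}(Q) \subseteq P_i$; since $P_i \subseteq P_{i+1}$ forces $k_{i+1}\leq k_i$, this non-increasing sequence of non-negative integers is eventually constant at some value $k$ for all $i \geq N' \geq N$, and hence $\sigma^k(Q) \subseteq P_i \subseteq Q$ for all such $i$.

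Next I would invoke the bivariate dimension polynomials $\psi_{P_i}(t_1, t_2)$ of Theorem 4.2. The inclusion $P_i \subseteq P_{i+1}$ gives a surjection $R_{rs}/(P_i\cap R_{rs}) \twoheadrightarrow R_{rs}/(P_{i+1}\cap R_{rs})$ of finitely generated integral $K$-algebras, so $\psi_{P_i}(r,s) \geq \psi_{P_{i+1}}(r,s)$ for all sufficiently large $(r,s)$. Moreover, a strict inclusion $P_i \subsetneq P_{i+1}$ produces a witness $f \in P_{i+1}\setminus P_i$ lying in some $R_{r_0 s_0}$, yielding strict prime inclusions $P_i \cap R_{rs} \subsetneq P_{i+1}\cap R_{rs}$ in the Noetherian polynomial ring $R_{rs}$ for all $(r,s) \geq (r_0, s_0)$; standard dimension theory for finitely generated $K$-algebras then forces $\psi_{P_i}(r,s) \geq \psi_{P_{i+1}}(r,s)+1$ for all large $(r,s)$. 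Thus stabilization of the sequence $\{\psi_{P_i}\}$ of numerical polynomials is equivalent to stabilization of the chain $\{P_i\}$ itself.

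The main obstacle is to verify stabilization of $\{\psi_{P_i}\}$. Here I would use the decomposition $\psi_{P_i} = \omega_Q + \lambda_i$ from the proof of Theorem 4.2, in which $\omega_Q(t_1, t_2)$ depends only on a characteristic set of $Q$ and is therefore the same for all $i \geq N$, while $\lambda_i(t_1)$ is a non-negative univariate numerical polynomial recording the combinatorial data $\sum_j s^{(i)}_j(r)$ with each shift $s^{(i)}_j(r) \in \{0, 1, \ldots, k\}$. The problem then reduces to showing that the non-increasing sequence $\{\lambda_i\}$ stabilizes. Each $\lambda_i$ is sandwiched between $0$ and $(k+1)\,p(r)$ for a fixed numerical polynomial $p$ (the size of a characteristic set of the restriction of $Q$), and a coefficient-by-coefficient analysis in the Kolchin binomial basis of univariate numerical polynomials of bounded degree, exploiting both monotonicity in $i$ and the finite-range integer constraint $s^{(i)}_j(r) \in \{0, 1, \ldots, k\}$, delivers the required stabilization. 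The second assertion concerning ordinary difference polynomial rings follows as the specialization $\Delta = \emptyset$, where $\psi_{P_i}$ is univariate of degree at most one and the coefficient stabilization is transparent.
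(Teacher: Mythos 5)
Your overall architecture coincides with the paper's at the decisive points: you reduce the ACC to the stabilization of the non-increasing sequence of dimension polynomials $\psi_{P_i}$, and you convert equality of dimension polynomials plus the inclusions $P_i\cap R_{rs}\subseteq P_{i+1}\cap R_{rs}$ into equality of heights, hence of ideals, in the Noetherian rings $R_{rs}$. Your preliminary normalization via Cohn's ACC for perfect ideals and Corollary 4.4 (a common reflexive closure $Q$ and a common shift $k$ for the whole tail of the chain) is a genuinely different and correct preparation that the paper does not need, and it is what makes your decomposition $\psi_{P_i}=\omega_Q+\lambda_i$ meaningful.

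The gap is exactly at the step you call the main obstacle. A non-increasing sequence of numerical polynomials, each eventually non-negative and eventually bounded above by one fixed polynomial, need \emph{not} stabilize: $\lambda_i(t)=t-i$ is strictly decreasing under $\succcurlyeq$ and satisfies $0\le\lambda_i(t)\le 2t$ for all sufficiently large $t$. The evaluation-at-finitely-many-points argument hiding inside your ``coefficient-by-coefficient analysis'' would require the inequalities $0\le\lambda_{i+1}(r)\le\lambda_i(r)\le (k+1)p(r)$ to hold for all $r$ beyond a threshold that is \emph{uniform in} $i$, and no such uniformity is available: the range of validity of $\psi_{P_i}(r,s)=\trdeg_K L_{irs}$ in Theorem 4.2 depends on $P_i$ (on its characteristic set and on the integers $r_0$, $r_1$ in that proof). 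Moreover $\lambda_i(r)$ is not the sum $\sum_j s_j^{(i)}(r)$ but a transcendence degree merely bounded by it, so the finite-range constraint $s_j^{(i)}(r)\in\{0,\dots,k\}$ does not transfer to the coefficients of $\lambda_i$ in the binomial basis; and the claim that $p(r)$ is a numerical polynomial is itself unproved. The paper closes precisely this gap by invoking \citep[Theorem 2.4.26]{KLMP99}: the set of bivariate dimension polynomials is well-ordered (satisfies the descending chain condition) under $\succcurlyeq$. Some such result, exploiting that the $\psi_{P_i}$ are dimension polynomials of subsets of $\mathbb{N}^{m+1}$ rather than arbitrary numerical polynomials, is the missing ingredient; once it is supplied, the rest of your argument, including the specialization to $\Delta=\emptyset$, goes through.
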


\begin{proof}
Let $P_{1}\subseteq P_{2}\subseteq\dots$ be an ascending chain of
prime $\sigma$-ideals of $R$. For any $r\in\mathbb{N}$, let $R_{rs}$
denote the polynomial ring $K[\{\tau y_{i}\,|\,\tau\in T(r, s),
1\leq i\leq n\}]$, let $P_{irs} = P_{i}\bigcap R_{rs}$, and let
$L_{irs}$ denote the quotient $\sigma$-field of $R_{rs}/P_{irs}$. By
Theorem 4.2, for every $i = 1, 2,\dots$, there exists a bivariate
numerical polynomials $\psi_{P_{i}}(t_{1}, t_{2})$ such that
$\psi_{P_{i}}(r, s) =\trdeg_{K}L_{irs}$ for all sufficiently large
$(r, s)\in\mathbb{N}^{2}$ (for all $(r, s)\in\mathbb{N}^{2}$ with
$r\geq r_{0}$ and $s\geq s_{0}$ where $r_{0}$ and $s_{0}$ depend on
$i$).

Let $\mathcal{W}$ denote the set of all bivariate numerical
polynomials that can serve as dimension polynomials of prime
$\Delta$-$\sigma$-ideals of $R$. Let $\succcurlyeq$ be a partial
order on $\mathcal{W}$ such that $\phi_{1}(t_{1}, t_{2})\succcurlyeq
\phi_{2}(t_{1}, t_{2})$ if and only if $\phi_{1}(r,
s)\geq\phi_{2}(r, s)$ for all sufficiently large $(r,
s)\in\mathbb{N}^{2}$. Then $$\psi_{P_{1}}(t_{1}, t_{2})\succcurlyeq
\psi_{P_{2}}(t_{1}, t_{2})\succcurlyeq\dots.$$ By \citep[Theorem
2.4.26]{KLMP99}, the set $\mathcal{W}$ satisfies the descending
chain condition with respect to $\succcurlyeq$. Therefore, there
exists $j\geq 1$ such that $$\psi_{P_{j}}(t_{1}, t_{2}) =
\psi_{P_{j+1}}(t_{1}, t_{2}) = \dots.$$ It follows that for all
sufficiently large $(r, s)\in\mathbb{N}^{2}$ and for any $k\geq 1$,
the prime ideals $P_{jrs}$ and $P_{j+k,rs}$ have the same height in
the ring $R_{rs}$. Since $P_{jrs}\subseteq P_{j+k,rs}$, we obtain
that $P_{jrs} = P_{j+k,rs}$ for all sufficiently large $(r,
s)\in\mathbb{N}^{2}$, hence $P_{j} = P_{k+j}$, so $P_{j} = P_{j+1}
=\dots$.
\end{proof}

The following illustrating example uses the notation of the proofs
of Theorems 4.1 and 4.2.

\begin{example}
{\em Let $K$ be a difference-differential ($\Delta$-$\sigma$-) field
with two basic derivations, $\Delta = \{\delta_{1}, \delta_{2}\}$,
and one basic endomorphism $\sigma$. Let $K\{y\}$ be the ring of
$\Delta$-$\sigma$-polynomials in one $\Delta$-$\sigma$-indeterminate
$y$ over $K$ and let $P$ be a linear (and therefore prime)
$\Delta$-$\sigma$-ideal of $K\{y\}$ generated by the
$\Delta$-$\sigma$-polynomial $A = \sigma^{2}y +
\sigma\delta_{1}^{2}y + \sigma\delta_{2}^{2}y$ (that is, $P = [A]$).
Then $P^{\ast} = [B]$, where $B = \sigma y + \delta_{1}^{2}y +
\delta_{2}^{2}y$, and Proposition 3.13 shows that $\{B\}$ is a
characteristic set of the $\Delta$-$\sigma$-ideal $P^{\ast}$. With
the notation of the proof of Theorem 4.1, we have $U'_{rs} = \{u\in
TY\,|\,\ord_{\Delta}u\leq r,\, \ord_{\sigma}u\leq s$ and $u$ is not
a multiple of $\sigma y\}$ and $U''_{rs} = \{u\in
TY\,|\,\ord_{\Delta}u\leq r,\, \ord_{\sigma}u\leq s$ and there is
$\tau\in T$ such that $u = \tau(\sigma y)$ and
$\ord_{\Delta}(\tau\delta_{1}^{2}) > r\}$. Then $\Card U'_{rs} =
\Card\{\delta_{1}^{i}\delta_{2}^{j}y\,|\,i+j\leq r\} =
\D{{r+2}\choose 2}$ and

\medskip

\noindent$\Card U''_{rs} =
\Card\{\sigma^{i}\delta_{1}^{j}\delta_{2}^{k}y\,|\,1\leq i\leq s,\,
r-2 < j+k\leq r\} = s\left(\D{{r+2}\choose 2} -  \D{{r+2-2}\choose
2}\right) = (2r+1)s.$

Since $\sigma B\in P$, the proof of Theorem 4.2 shows that if
$\psi_{P}(t_{1}, t_{2})$ is the $\Delta$-$\sigma$-dimension
polynomial of the $\Delta$-$\sigma$-ideal $P$, then $$\psi(r, s) =
\Card U'_{rs} + \Card U''_{rs} +
\Card\{\sigma\delta_{1}^{i}\delta_{2}^{j}y\,|\,i+j\leq r-2\}$$ for
all sufficiently large $(r, s)\in\mathbb{N}^{2}$. It follows that\\
$\psi_{P}(t_{1}, t_{2}) = (2t_{1}+1)t_{2} + \D{{t_{1}+2}\choose 2} +
\D{{t_{1}}\choose 2}$, that is $$\psi_{P}(t_{1}, t_{2}) =
(2t_{1}+1)t_{2} + t_{1}^{2} + t_{1} + 1.$$}
\end{example}

We conclude with a brief discussion of the connection between the
$\Delta$-$\sigma$-dimension polynomial and the concept of strength
of a system of difference-differential equations in the sense of A.
Einstein.

Consider a system of difference-differential equations
\begin{equation}
A_{i}(f_{1},\dots, f_{n}) = 0\hspace{0.3in}(i=1,\dots, q)
\end{equation}
with $m$ basic partial derivations and one translation $\sigma$ over
a field $K$ of functions of $m$ real variables $x_{1},\dots, x_{m}$
treated as a difference-differential field with basic set of
derivations $\Delta = \{\delta_{1},\dots, \delta_{m}\}$ and one
translation $\sigma$ where $\delta_{i} =
\partial/\partial x_{i}$ ($1\leq i\leq m$) and
$\sigma:f(\overline{x})\mapsto f(\overline{x}+\overline{h})$ is a
shift of the argument $\overline{x}=(x_{1},\dots, x_{m})$ by some
vector $\overline{h}$ in $\mathbb{R}^{m}$. ($f_{1},\dots, f_{n}$ are
unknown functions of $x_{1},\dots, x_{m}$).  We assume that system
(4.3) is algebraic, that is, all $A_{i}(y_{1},\dots, y_{n})$ are
elements of a ring of $\Delta$-$\sigma$-polynomials $K\{y_{1},\dots,
y_{n}\}$ over the functional $\Delta$-$\sigma$-field $K$.

Let us consider a sequence of nodes in $\mathbb{R}^{m}$ that begins
at some initial node $\mathcal{P}$ and goes in the direction of the
vector $\overline{h}$ with step $|\overline{h}|$. We say that {\em a
node $\mathcal{Q}$ has $\sigma$-order $i$} (with respect to
$\mathcal{P}$) if the distance between $\mathcal{Q}$ and
$\mathcal{P}$ is $i|\overline{h}|$.

Let us consider the values of the unknown functions $f_{1},\dots,
f_{n}$ and their partial derivatives of order at most $r$ at the
nodes of $\sigma$-order at most $s$ ($r$ and $s$ are positive
integers). With the notation of section 2, we can say that we
consider the values $\tau f_{i}(\mathcal{P})$ where $\tau\in T$,
$\ord_{\Delta}\tau\leq r$ and $\ord_{\sigma}\tau\leq s$.

If $f_{1},\dots, f_{n}$ should not satisfy any system of equations
(or any other condition), these values can be chosen arbitrarily.
Because of the system (and equations obtained from the equations of
the system by partial differentiations and translations in the
direction $\overline{h}$, the number of independent values of the
functions $f_{1},\dots, f_{n}$ and their partial derivatives whose
order does not exceed $r$ at the nodes of $\sigma$-order at most $s$
decreases. This number, which is a function of two variables, $r$
and $s$,  is the ''measure of strength'' of the system in the sense
of A. Einstein. We denote it by $S_{rs}$. Suppose that the
$\Delta$-$\sigma$-ideal $P$ generated in $K\{y_{1},\dots, y_{n}\}$
by the $\Delta$-$\sigma$-polynomials $A_{1},\dots, A_{q}$ is prime
(e. g., the polynomials are linear). Then we say that the system of
difference-differential equations (4.3) is prime. In this case, the
$\Delta$-$\sigma$-dimension polynomial $\psi_{P}(t_{1}, t_{2})$ has
the property that $\psi_{P}(r, s) = S_{rs}$ for all sufficiently
large $(r, s)\in \mathbb{N}^{2}$, so this dimension polynomial is
the measure of strength of the system of difference-differential
equations (4.3) in the sense of A. Einstein. An important
perspective for the use of the obtained results is the computation
of dimension polynomials (and therefore the Einstein's strength) of
differential equations with delay that arise in applications.
Examples of the corresponding computation in the differential and
inverse difference cases can be found in \citep[Ch. 6 and
9]{KLMP99}. Computations of the same kind in the non-inverse case
(based on the results of this paper) is a subject for future work.

\section{Acknowledges}

This research was supported by the NSF grant CCF-1714425 and NSA
grant H98230-18-1-0016.

\bigskip

\noindent{\bf References}

\end{document}